\newtheorem{theorem}{Theorem}
\newtheorem{lemma}{Lemma}
\newtheorem{definition}{Definition}
\newtheorem{corollary}{Corollary}
\newtheorem{question}{Question}
\newtheorem{example}{Example}
\newtheorem{conjecture}{Conjecture}
\newtheorem{proposition}{Proposition}
\newtheorem{computer search}{Computer Search}
\begin{document}

\title{Slide Statistics And Financial Returns}
\author{William J. Ralph}

\address{Mathematics Department\\
Brock University\\
St. Catharines, Ontario\\
Canada L2S 3A1\\
\textnormal{Email: bralph@brocku.ca}\\
\textnormal{Phone: (905) 688-5550 x3804}}

\date{March 18, 2015}
\begin{abstract}

We present a new approach to financial returns based on an infinite family of statistics called \textit{slide statistics} that we introduce. The evidence these statistics provide suggests that certain distributions such as the stable distributions are not good models for the financial returns from various securities or indexes like the S$\&$P $500$ and the Dow Jones. Formally, we associate with any finite subset of a metric space an infinite sequence of scale invariant numbers $\rho_1,\rho_2,\dots$ derived from a variant of differential entropy called the genial entropy. We give explicit formulas for $\rho_1$ and $\rho_2$ that are easily evaluated by a computer and make this theory particularly suitable for applications. As statistics for point processes, these numbers often appear to converge in simulations and we give examples where $1/\rho_1$ converges to the Hausdorff dimension and we prove that $\rho_1 \geq 0$. For a uniform random variable $X$ on $[a,b]^n$, the evidence from simulations suggests that $\rho_1(X) =1/n$ and $\rho_2(X) =-\pi^2/(6n^2)$ which yields new tests for spatial randomness. The slide statistics describe continuous random variables in an entirely new way.  For example, if $Z$ is any normal variable then simulations suggest that $\rho_1(Z) =4/\pi$ and $\rho_2(Z) =-1$ which provides new goodness of fit tests for normality. 
\end{abstract}
\keywords{financial mathematics, stochastic process, Hausdorff dimension,spatial statistics, fractal analysis, slide statistics, tangible process, }

\subjclass{Primary 91G70, Secondary 28A78}

\maketitle
\section{Introduction}\label{S:intro}
We develop new entropy based statistics $\rho_1,\rho_2,\dots$ called \textit{slide statistics}  which can be computed from any sample data in a metric space. As an application, we use these statistics to test whether financial returns are independent observations from a particular distribution. For example, Figure~\ref{fig:Fig1} shows plots of  $\rho_1$ against $n$ for data $T_n$ consisting of n-tuples of consecutive returns regarded as a subset of the metric space $R^n$ with the usual metric. As can be seen, the lower curve corresponding to the S$\&$P $500$ is very different from the ones obtained for either the Normal or Laplace distributions. Any potential model for the returns of the  S$\&$P $500$ must be able simulate the $\rho_1$ curve in Figure~\ref{fig:Fig1} which is a new requirement that is apparently difficult to meet.  The statistics $\rho_i$ contain information about the interplay between the dimension of a space and distributions on that space and provide a new way of describing continuous random variables and stochastic processes in general. The $\rho_i$  will take some time to define but we will provide explicit formulas for slide statistics $\rho_1$  and $\rho_2$  that are easily computed for any sample in a metric space. 

\begin{figure}[!h]
\centering
\includegraphics[width=4in]{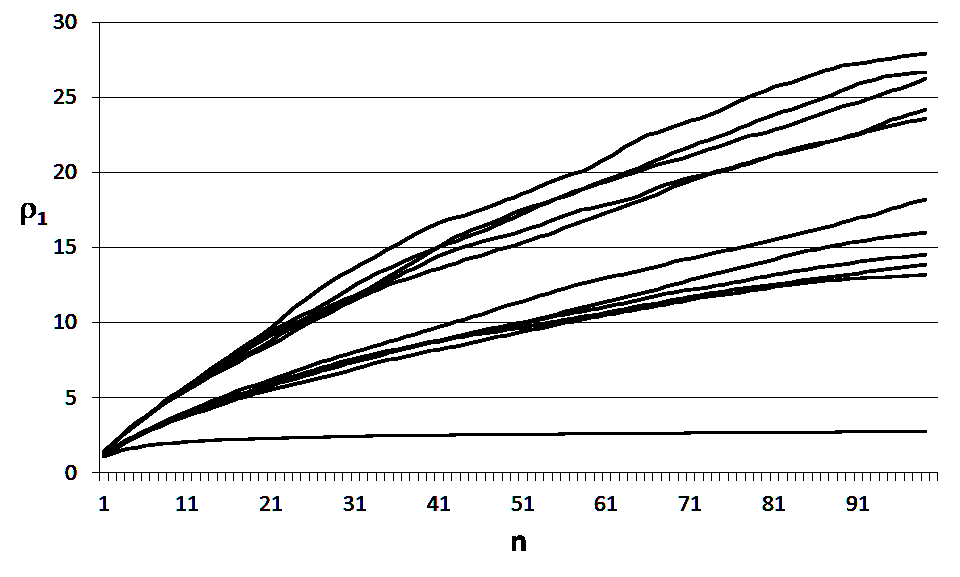}
\caption{The lower graph is the plot of  $\rho_1(T_n)$ against $n$ for the S$\&$P $500$ for the $5000$ trading days ending December 31, 2014.  The middle cluster of five graphs are the corresponding plots obtained from five different simulated sequences of returns with the Laplace distribution.  The upper cluster of five graphs are the corresponding plots obtained from five different simulated sequences of normally distributed returns.  }
\label{fig:Fig1}
\end{figure}

Our applications will focus on financial returns but many other important processes in science and mathematics also yield data in the form of a set of points in a metric space that must be quantified and interpreted. Fields like fractal analysis have developed in order to obtain dimensional information from a wide range of real world data using quantities like the Hausdorff, information and correlation dimensions ~\cite{F,H}.  One of the problems with these traditional measures of dimension is that they tend to concentrate on the geometric properties of a set while ignoring the statistical origins of the data.  We will give examples where $\rho_1(U)$ appears to converge to the reciprocal of the Hausdorff dimension when $U$ is taken to be a a larger and larger random sample from a fractal.

It is not clear in general how to assign a dimension to a point process but our theory and simulations suggest a purely statistical way to identify a class of them  for which it does make sense to assign a dimension.   Specifically, we think the dimension of a point process $P$ should be defined to be $d$ provided  $\rho_1(P)=1/d$ and $\rho_n(P)=(-1)^{n+1}(n-1)!(n-1)\zeta(n)/d^n$ for $n>1$ in which case we say the process is tangible with dimension $d$. In the case of a tangible process, the values of $\rho_n(P)$ are entirely determined by the number $d$ but this relationship does not hold in general. A possible example of a tangible process is the random generation of  points $(x_1,x_2,\dots,x_d)$ in $[a,b]^d$, where the $x_i$ are chosen independently and uniformly at random from $[a,b]$, for which we provide evidence that $\rho_1(P) =1/d$ and $\rho_2(P) =-\pi^2/(6d^2)$. In particular, $\rho_1$ and $\rho_2$ are new spatial statistics for testing hypotheses concerning the distribution of points in $[a,b]^d$. 

Although we will sometimes use fractals as examples, these statistics can be applied to any random variable with values in a metric space. In particular, the slide statistics appear to describe continuous random variables $X$ in an entirely new way that has nothing to do with mean and standard deviation since  $\rho_n(aX+b)=\rho_n(X)$ for $a\neq 0$ .  For example, the first two slide statistics for any normal variable $Z$ appear to converge to $\rho_1(Z) =4/\pi$ and $\rho_2(Z) =-1$ which provide the basis for a new goodness of fit test for normality. For any exponential random variable $W$, simulation gives that $\rho_1(W) \approx 1.4590$ and $\rho_2(W) \approx-0.7333$ and we conjecture from these and other simulations that $\rho_1(X) \geq 1$ for any continuous real-valued random variable $X$. If we think of $1/\rho_1(X)$ as the \textit{dimension} of the random variable $X$, then this last conjecture says the uniform distribution on $[a,b]$ has the maximum dimension of any continuous real-valued random variable. The second slide statistic $\rho_2$ is negative for the examples given so far but $\rho_2$ is positive for the Cauchy distribution and is often found to be positive for the log daily returns of the Standard and Poor 500 Index which is yet another demonstration of the non-normality of these returns.  

The construction of $\rho_i(U)$ requires several ideas working in concert and can be summarized as follows. In Section~\ref{S:2}, we introduce a variant of the differential entropy called the genial entropy which is the starting point for defining the slide statistics.  Unlike the differential entropy, the genial entropy is scale invariant and in Section~\ref{S:inequality} we prove it is never negative which will give a new lower bound for the differential entropy.  Given a finite set of distinct points $U$ in a metric space, we find the distance from each point to its nearest neighbour and arrange these distances in descending order so $ d_1 \geq d_2 \geq \dots \geq d_n > 0$. Let $f(x)$ be the function on $[0,1)$ whose value on $[(i-1)/n, i/n)$ is $d_i$. Let $A(t)$ be the area under $(f(x))^t$ and let $\sigma(t)$ denote the genial entropy of the density $\frac{(f(x))^t}{A(t)}$ which turns out to be $0$ at $t=0$. We then define $\rho_n(U)$ to be the $n$th derivative from the right at $0$ of the function $\sigma(t)$ which is developed in Section~\ref{S:calculus}.  

In Section~\ref{S:calculus}, we derive an explicit formula for $\rho_1(U)$ and state a conjectured formula for $\rho_2(U)$. The fact that $\rho_1$ and  $\rho_2$ are easily evaluated using a computer makes them particularly suitable for practical applications. The results obtained from the simulation of $\rho_1$ and $\rho_2$ in a variety of contexts are summarized in Section~\ref{S:slide} and demonstrate the interplay between dimension and distribution that is captured by these statistics. As applications in Section~\ref{S:returns}, we consider a variety of possible distributions as models for financial returns and use the slide statistics to illustrate how poorly these models fit the empirical data.

\section{The genial Entropy}\label{S:2}
Our starting point for the development of the slide statistics is the differential entropy $-\int fln(f)dx$ of a density $f$ which is well known in statistics and information theory ~\cite{CO}. Our focus will be on a variant of the differential entropy called the genial entropy or g-entropy which will be described in Definition~\ref{D:2.02}. In the simplest case of a probability density $f$ that has an inverse function such as $-ln(x)$ on $(0,1]$, the genial entropy is just the sum of the differential entropies of $f$ and $f^{-1}$.  The next proposition shows how this sum can be written in a form that makes sense for densities that may not have inverses.

\begin{proposition}\label{P:2.01} Suppose $f$ is a continuous function on $[0,b]$ with the properties that the derivative of $f$ exists and is negative on $(0,b)$, $f(b)=0$ and $\int_0^b fdx=1$. Also assume the differential entropies of $f$ and  $f^{-1}$ both exist. Then the sum of their differential entropies is given by $ -\int_0^b fln(f)dx-\int_{0}^{f(0)} f^{-1}ln(f^{-1})dy =  -1-\int_0^b fln(xf)dx$. 
\end{proposition}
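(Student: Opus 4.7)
The plan is to move everything to one side and show the required identity reduces, after substitution and integration by parts, to the hypothesis $\int_0^b f\,dx = 1$ together with some boundary cancellations. Specifically, using $\ln(xf)=\ln(x)+\ln(f)$ on the right-hand side and canceling the common term $-\int_0^b f\ln(f)\,dx$, the claim becomes
\[
\int_0^{f(0)} f^{-1}(y)\,\ln\bigl(f^{-1}(y)\bigr)\,dy \;=\; 1 + \int_0^b f(x)\ln(x)\,dx.
\]
So the first step is this algebraic reduction, which isolates the genuine content of the proposition.

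Next, because $f$ is continuous on $[0,b]$ and strictly decreasing on $(0,b)$ with $f(b)=0$, the inverse $f^{-1}\colon [0,f(0)]\to[0,b]$ is well-defined and I can perform the change of variables $y=f(x)$ in the integral on the left. Since $dy=f'(x)\,dx$ and $f^{-1}(y)=x$, and the limits $y=0,f(0)$ correspond to $x=b,0$, this rewrites the left-hand side as $-\int_0^b x\ln(x)\,f'(x)\,dx$.

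Then I would integrate by parts with $u=x\ln(x)$ and $dv=f'(x)\,dx$, giving $du=(\ln(x)+1)\,dx$ and $v=f(x)$. This produces a boundary term $-[x\ln(x)f(x)]_0^b$ plus $\int_0^b (\ln(x)+1)f(x)\,dx$. The latter splits as $\int_0^b f\,dx + \int_0^b f\ln(x)\,dx = 1 + \int_0^b f(x)\ln(x)\,dx$ by the normalization hypothesis, which is exactly the target.

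The main obstacle, and the only subtle point, is handling the boundary term $[x\ln(x)f(x)]_0^b$. At $x=b$ it vanishes because $f(b)=0$, but at $x=0$ one must argue $\lim_{x\to 0^+} x\ln(x)f(x)=0$; this follows from the continuity of $f$ at $0$ (so $f(0)$ is finite) together with $\lim_{x\to 0^+}x\ln(x)=0$. One should also briefly note that the substitution and integration by parts are legitimate under the stated monotonicity and entropy-existence hypotheses, e.g.\ by carrying out the argument on a subinterval $[\varepsilon,b-\varepsilon]$ and letting $\varepsilon\to 0^+$, with the existence of both differential entropies ensuring the remaining improper integrals converge.
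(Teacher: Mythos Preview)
Your proposal is correct and follows essentially the same route as the paper: substitute $y=f(x)$ in the $f^{-1}$-integral and then integrate by parts with $u=x\ln(x)$, $dv=f'(x)\,dx$. The only differences are cosmetic—you first perform the algebraic cancellation of $\int f\ln f$ and are more explicit about the boundary terms, whereas the paper compresses these steps into two sentences.
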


\begin{proof} 
Substitute $y=f(x)$ into  the integral $-\int_{0}^{f(0)} f^{-1}ln(f^{-1})dy$ to get $-\int_{b}^{0} xln(x)f'(x)dx$ which equals $\int_{0}^{b} xln(x)f'(x)dx$. After integrating by parts this last integral becomes $ -1-\int_0^b fln(x)dx$ and the result follows.
\end{proof}
The genial entropy will only be defined for densities of the following special form.
\begin{definition} \label{D:2.01}  A corner density is a function $f:I \rightarrow [0,\infty)$ where $I \cup \{0\}$ is a connected interval contained in $[0,\infty)$, $f$ is monotone decreasing  and $\int_I fdx=1$. 
\end{definition}
Here is our definition of genial entropy which is motivated by the conclusion of Proposition~\ref{P:2.01}.
\begin{definition} \label{D:2.02} Let $f:I \rightarrow [0,\infty)$ be a corner density. The genial entropy or g-entropy of $f$ is defined by $G(f)= -1-\int_I fln(xf)dx$ when this integral exists, with the usual convention that $0 \ln (0) = 0$. 
\end{definition}
If $f$ satisfies the conditions of Proposition~\ref{P:2.01}, then $G(f) = G(f^{-1})$ so in particular $-\ln(x)$ and $e^{-x}$ must have the same genial entropy which happens to be Euler's constant as shown in Table~\ref{T:2.01}. In Section~\ref{S:inequality}, we will prove the genial entropy is always nonnegative.

\begin{table}[!htb]{The genial entropy of some corner densities.}
\begin{center}

\begin{tabular}{|c|c|c|}
 \hline
 \textit{Density} & \textit{Domain} &   \textit{Genial Entropy}  \\
\hline
$1/b$ & $[0,b]$ & $0$   \\
\hline
$-\ln(x)$  & $(0,1]$ & $\gamma$ \\
\hline
$e^{-x}$  & $[0,\infty)$ & $\gamma$   \\
\hline
$a/(x^{1-a})$ for $a \in (0,1)$  & $(0,1]$ & $-\ln(a)$  \\
\hline
$2e^{-x^2}/\sqrt{\pi}$  & $[0,\infty)$ & $(-1+\gamma+\ln(\pi))/2 $   \\
\hline
$2/(\pi(1+x^2))$  & $[0,\infty)$ & $-1+\ln(2)+\ln(\pi) $  \\
\hline

\end{tabular}
\caption{ }\label{T:2.01}
\end{center}
\end{table}
Unlike the differential entropy, the genial entropy is invariant under changes of scale. 

\begin{proposition}\label{P:2.02}  Let $f:I \rightarrow [0,\infty)$ be any corner density.  Then for any $\lambda >0$ the function $h:\lambda I \rightarrow [0,\infty)$ defined by $h(z)= \frac{1}{\lambda} f(\frac{z}{\lambda})$ for $z \in \lambda I$ is a corner density with the same genial entropy as $f$.
\end{proposition}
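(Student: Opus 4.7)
The plan is to verify the two claims separately: that $h$ is indeed a corner density, and that $G(h) = G(f)$. Both will follow from the substitution $z = \lambda x$, $dz = \lambda\, dx$, but it is worth handling the structural conditions before touching the entropy integral.

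For the corner-density claim, I will first observe that since $\lambda > 0$ the set $\lambda I \cup \{0\}$ is still a connected interval in $[0, \infty)$: multiplication by $\lambda$ is an increasing homeomorphism of $[0, \infty)$ onto itself that fixes $0$. Monotone decreasingness of $h$ is immediate from that of $f$, because $z \mapsto z/\lambda$ is increasing, so $h = \tfrac{1}{\lambda}(f \circ (\cdot / \lambda))$ inherits the monotonicity. The normalization $\int_{\lambda I} h\, dz = 1$ is a one-line change of variables: $\int_{\lambda I} \tfrac{1}{\lambda} f(z/\lambda)\, dz = \int_I f(x)\, dx = 1$.

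For the entropy equality, I will plug $h$ into Definition~\ref{D:2.02} and apply the same substitution. The integrand transforms pleasantly because the factor of $\lambda$ cancels inside the logarithm:
\begin{equation*}
z\, h(z) = \lambda x \cdot \tfrac{1}{\lambda} f(x) = x\, f(x),
\end{equation*}
so $\ln(z\, h(z)) = \ln(x\, f(x))$. Combined with $h(z)\, dz = \tfrac{1}{\lambda} f(x) \cdot \lambda\, dx = f(x)\, dx$, this gives
\begin{equation*}
\int_{\lambda I} h(z)\ln(z\, h(z))\, dz = \int_I f(x)\ln(x\, f(x))\, dx,
\end{equation*}
and therefore $G(h) = -1 - \int_I f \ln(xf)\, dx = G(f)$, with existence on one side equivalent to existence on the other.

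There is no real obstacle here; the proof is essentially the observation that the genial entropy was designed around the scale-invariant quantity $x f(x)$ rather than $f(x)$ alone, which is exactly what makes the factors of $\lambda$ conspire to cancel. The only care needed is to note that the substitution preserves the endpoint behavior (including the $0 \ln 0 = 0$ convention at any boundary where $f$ or $h$ vanishes), which is automatic since $z = \lambda x$ maps endpoints of $I$ to endpoints of $\lambda I$.
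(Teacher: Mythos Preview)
Your proof is correct and follows exactly the same approach as the paper's own proof: both plug $h$ into the definition of $G$ and apply the substitution $z=\lambda x$, using that $z\,h(z)=x\,f(x)$ so the logarithm is unchanged. If anything, you are slightly more thorough, since you explicitly verify the corner-density conditions (interval form, monotonicity, normalization) which the paper's proof leaves implicit.
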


\begin{proof} 
$G(h) = -1-\int_{\lambda I} hln(zh)dz$ $=$ $-1-\int_{\lambda I} \frac{1}{\lambda} f(\frac{z}{\lambda})ln(z \frac{1}{\lambda} f(\frac{z}{\lambda}))dz$. After substituting $z=\lambda x$, this last integral becomes $-1-\int_I fln(xf)dx = G(f)$.
\end{proof}

In Section~\ref{S:slide}, we will consider data consisting of a finite set of distinct points in a metric space.  The first steps in computing the slide statistics will be to find the distances $d_i$ from each point to its nearest neighbour, arrange them in decreasing order and use them to construct the corner density $f_{D^*}$ discussed in the next definition and theorem.   

\begin{definition}\label{D:2.03} Let $D$ be any sequence $d_1 , d_2 , \dots , d_n $ with $ d_1 \geq d_2 \geq \dots \geq d_n > 0$ and let $D^*$ be the sequence $d_1/\mu , d_2/\mu , \dots , d_n/\mu $ where $\mu$ is the mean of the $d_i$.

\begin{enumerate}
\item Define $f_D:[0,1)\rightarrow[0,\infty)$ to have the value $d_i$ on the interval $[(i-1)/n, i/n)$. In particular, $f_{D^*}$ is the corner density whose value on $[(i-1)/n, i/n)$ is $d_i/\mu$.  
\item Define $L_{D}:[0,\infty)\rightarrow[0,1]$ by $L_{D}(y)= m/n$ where $m$ is the number of elements of $D$ that are less than or equal to $y$. In other words, $L_{D}$ is the restriction of the empirical cumulative distribution function for the data in $D$ to the interval $[0,\infty)$. 
\end{enumerate}
\end{definition}

The next theorem shows that the genial entropy of  $f_{D^*}$ can be computed using the empirical cumulative distribution function.

\begin{theorem}\label{T:2.03} Let $D$ be any sequence $d_1 , d_2 , \dots , d_n $ with $ d_1 \geq d_2 \geq \dots \geq d_n > 0$. Then $f_{D^*}$ and $1-L_{D^*}$ are corner densities with the same genial entropy.
\end{theorem}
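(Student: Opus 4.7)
The plan is to verify that both objects are corner densities and then exploit a geometric duality: as decreasing step functions, $f_{D^*}$ and $1 - L_{D^*}$ trace out the same staircase region in the plane with the roles of the two coordinate axes swapped. Both are nonnegative and monotone decreasing by construction, so only normalization needs checking. For $f_{D^*}$, the integral equals $\frac{1}{n}\sum_i d_i/\mu = 1$ since $\mu$ is the mean of the $d_i$. For $1 - L_{D^*}$, the standard identity $\int_0^\infty (1 - F)\,dy = E[Y]$ applied to the empirical distribution of $\{d_i/\mu\}$ again gives $1$.

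The central observation is that, up to a Lebesgue-null set, the common subgraph
\[
S = \{(x,y) : 0 \le x < 1,\ 0 \le y < f_{D^*}(x)\} = \{(x,y) : 0 \le y,\ 0 \le x < 1 - L_{D^*}(y)\},
\]
since $f_{D^*}(x) > y$ iff strictly more than $\lfloor nx\rfloor$ of the values $d_i/\mu$ exceed $y$, iff $x < 1 - L_{D^*}(y)$, away from the finite grid of jump lines. Fubini's theorem then delivers the two cross-identities
\begin{align*}
\int_0^1 f_{D^*}(x)\ln x\,dx &= \iint_S \ln x\,dx\,dy = \int_0^\infty \left[(1 - L_{D^*})\ln(1 - L_{D^*}) - (1 - L_{D^*})\right] dy, \\
\int_0^\infty (1 - L_{D^*})(y)\ln y\,dy &= \iint_S \ln y\,dx\,dy = \int_0^1 \left[f_{D^*}\ln f_{D^*} - f_{D^*}\right] dx,
\end{align*}
obtained by antidifferentiating $\ln x$ and $\ln y$ on each slice (with $0 \ln 0 = 0$ at the origin).

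Splitting $\ln(xf_{D^*}) = \ln x + \ln f_{D^*}$ and using that both $f_{D^*}$ and $1 - L_{D^*}$ integrate to $1$, the first cross-identity rewrites $\int f_{D^*}\ln(xf_{D^*})\,dx$ as $\int f_{D^*}\ln f_{D^*}\,dx + \int (1-L_{D^*})\ln(1-L_{D^*})\,dy - 1$; the analogous split of $\ln(y(1-L_{D^*}))$ combined with the second cross-identity yields the same expression for $\int (1-L_{D^*})\ln(y(1-L_{D^*}))\,dy$. Hence both genial entropies equal $-\int f_{D^*}\ln f_{D^*}\,dx - \int (1-L_{D^*})\ln(1-L_{D^*})\,dy$, proving the theorem. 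The only delicate step is the subgraph identity itself, which reduces to a finite combinatorial check on the rectangles of the staircase and careful handling of the $0\ln 0$ convention at the left endpoints.
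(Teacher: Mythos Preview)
Your proof is correct and complete: the subgraph identity is exact (not merely up to null sets), Fubini is justified since $|\ln x|$ and $|\ln y|$ are integrable on the bounded staircase region $S$, and your algebraic combination of the two cross-identities is accurate.

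Your route, however, differs from the paper's. The paper works entirely with finite sums: it evaluates $-1-\int_0^1 f_{D^*}\ln(xf_{D^*})\,dx$ rectangle by rectangle using the antiderivative $\int c\ln(xc)\,dx = xc\ln(xc)-xc$, obtaining $\sum_i\bigl(t_{i-1}e_i\ln(t_{i-1}e_i)-t_ie_i\ln(t_ie_i)\bigr)$, and then reindexes this sum (using $t_0=0$, $e_{m+1}=0$) to recognize it as the corresponding expression for $1-L_{D^*}$. Your argument instead lifts the problem to the two-dimensional region $S$ and applies Fubini to $\iint_S\ln x$ and $\iint_S\ln y$, extracting two cross-identities that make the symmetry between the two functions manifest. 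The paper's approach is more elementary (pure finite-sum bookkeeping, no measure theory invoked), while yours is more conceptual: it transparently explains \emph{why} the result holds---$f_{D^*}$ and $1-L_{D^*}$ bound the same region from the two coordinate directions---and it generalizes immediately to any pair of corner densities related in this way, recovering Proposition~\ref{P:2.01} as the smooth case of the same phenomenon.
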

\begin{proof} 
The sequence $D$ may contain repetitions, so assume that the discontinuities of $f_{D^*}$ in $(0,1)$ occur at $t_i$ with $t_1 < t_2 \dots < t_{m-1}$ and let $t_0=0$ and $t_m=1$. Suppose the value of $f_{D^*}$ on $[t_{i-1},t_i)$ is $e_i$ for $i=1,2,\dots,m$ and let $e_{m+1}=0$ so $ e_1 > e_2 > \dots > e_m > e_{m+1}=0$. We can then describe $1-L_{D^*}$ as the function that takes the value $t_i$ on $[e_{i+1},e_i)$ for $i=1,2,\dots,m$ and the value $0$ on $[e_1,\infty)$. Since $\int_0^1 f_{D^*}(x)dx=\sum_{i=1}^n (d_i/\mu )(i/n-(i-1)/n) =1$, we must also have that $\sum_{i=1}^m e_i(t_i-t_{i-1})=1$. Rearranging this last sum gives $\sum_{i=1}^m t_i(e_i-e_{i+1})=1$ so $\int_0^\infty 1-L_{D^*}(y)dy=1$ and $1-L_{D^*}$ is a corner density.
To see that  $f_{D^*}$ and $1-L_{D^*}$ have the same genial entropy, we use $t_0=0$, $e_{m+1}=0$ and the convention $0 \ln (0) = 0$ to obtain:
\begin{equation}
\begin{split}
-1-\int_0^1 f_{D^*}(x)ln(xf_{D^*}(x))dx
& =-1-\sum_{i=1}^m (\int_{t_{i-1}}^{t_i} e_i ln(x e_i)dx)\\
& =\sum_{i=1}^m ( t_{i-1} e_i  ln(t_{i-1} e_i )-t_i e_i  ln(t_i e_i ))\\
& =\sum_{i=1}^m ( e_{i+1} t_i ln(e_{i+1} t_i))-e_i t_i ln(e_i t_i) \\
& =-1-\sum_{i=1}^m (\int_{e_{i+1}}^{e_i} t_i ln(y t_i)dy)\\
& =-1-\int_0^{\infty} (1-L_{D^*}(y))ln(y(1-L_{D^*}(y)))dy
\end{split}
\end{equation}
\end{proof}

In view of Proposition~\ref{P:2.02}, we could have defined $f_{D^*}$ on any interval $[0,b)$ and obtained a function with the same genial entropy.    The interval $[0,1)$ was chosen in particular to insure the relationship between the genial entropies of  $f_{D^*}$ and $1-L_{D^*}$ stated in part (2) of Theorem~\ref{T:2.03}. We now show that the genial entropy is never negative.

\section{The genial entropy Inequality}\label{S:inequality}
Our goal in this section is to prove that the genial entropy of a corner density can never be negative which we will use to prove that the first slide statistic $\rho_1 \geq 0$. The idea is to first prove the necessary inequalities for step functions and then use the fact that monotone functions can be uniformly approximated by step functions.  We begin with an inequality for step functions.

\begin{lemma}\label{L:3.01} Suppose that $y_1 \geq y_2 \geq \dots \geq y_n \geq 0$ and $ 0 =t_0 \leq t_1 \leq t_2 \leq \dots \leq t_n$. Let $A = \sum_{i=1}^n y_i (t_i-t_{i-1})$. Then  $\sum_{i=1}^n (t_{i-1} y_i ln(t_{i-1} y_i)-t_i y_i ln(t_i y_i)) \geq -AlnA $
\end{lemma}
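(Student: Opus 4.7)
The plan is to recast the left-hand side as the negative of an integral, bound its integrand by a pointwise inequality tailored to monotone decreasing functions, and then evaluate the resulting bound in closed form by substitution.

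First, I would dispose of the degenerate case $A = 0$: every summand on the left then vanishes under the convention $0 \ln 0 = 0$ (since for each $i$ either $y_i = 0$, making both arguments of $\ln$ zero, or $t_i = t_{i-1}$, making the two summands in that term identical), and the right side is $0$. So assume $A > 0$ and define the step function $f\colon[0,t_n]\to[0,\infty)$ by $f(x) = y_i$ on $[t_{i-1},t_i)$, so that $A = \int_0^{t_n} f(x)\,dx$. A direct computation on each subinterval (using the antiderivative $\int \ln(xy_i)\,dx = x\ln(xy_i) - x$ when $y_i>0$) yields
\[
\sum_{i=1}^n \bigl(t_{i-1}y_i \ln(t_{i-1}y_i) - t_i y_i \ln(t_i y_i)\bigr) \;=\; -\int_0^{t_n} f(x)\ln(xf(x))\,dx \;-\; A,
\]
so the lemma reduces to proving $\int_0^{t_n} f(x)\ln(xf(x))\,dx \le A\ln A - A$.

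The key observation is a pointwise inequality. Set $F(x) := \int_0^x f(t)\,dt$. Since $f$ is nonincreasing, $f(t) \ge f(x)$ for $t\in[0,x]$, hence $F(x) \ge \int_0^x f(x)\,dt = xf(x)$. Applying the increasing function $\ln$ and multiplying by $f(x) \ge 0$ (with the convention on intervals where $f$ vanishes) gives $\int_0^{t_n} f\ln(xf)\,dx \le \int_0^{t_n} f\ln F\,dx$. The upper bound evaluates exactly via the substitution $v = F(x)$, $dv = f(x)\,dx$, which maps $[0,t_n]$ onto $[0,A]$:
\[
\int_0^{t_n} f(x)\ln F(x)\,dx \;=\; \int_0^A \ln v\,dv \;=\; A\ln A - A.
\]
Chaining the two steps delivers the required estimate and hence the lemma.

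The main subtlety I anticipate is bookkeeping around the convention $0\ln 0 = 0$, which must be invoked at $x=0$ (where $t_0 y_1 = 0$) and on any subintervals where $y_i = 0$ or $t_i = t_{i-1}$. These zero-valued stretches also need explicit comment when applying the change of variables $v = F(x)$: on a subinterval where $f\equiv 0$, $F$ is constant, so the contribution to both the $x$-integral and the transported $v$-integral is zero, and the substitution concatenates correctly across the remaining strictly increasing linear pieces of $F$. I would state the convention at the start and verify once that every transition respects it.
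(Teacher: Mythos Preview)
Your proof is correct, but it follows a genuinely different route from the paper's. The paper works entirely discretely: it observes that the inequality is equivalent to
\[
\phi(A)+\sum_{i=2}^{n}\phi(t_{i-1}y_i)\;\ge\;\sum_{i=1}^{n}\phi(t_i y_i),\qquad \phi(x)=x\ln x,
\]
and then proves that the vector $(A,\,y_2 t_1,\,y_3 t_2,\dots,y_n t_{n-1})$ majorizes $(y_1 t_1,\,y_2 t_2,\dots,y_n t_n)$ by exhibiting an explicit sequence of Robin--Hood transfers; Karamata's inequality for the convex function $\phi$ then finishes. Your argument instead passes immediately to the step function $f$, uses the elementary pointwise bound $F(x)=\int_0^x f\ge x f(x)$ (valid because $f$ is nonincreasing), and closes with the substitution $v=F(x)$ to evaluate $\int f\ln F$ exactly.

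Each approach has something to recommend it. The paper's majorization argument makes the combinatorial structure explicit and identifies the equality case cleanly, at the cost of importing Karamata's inequality. Your argument is fully self-contained and arguably more in the spirit of the surrounding section, since Lemmas~3.02--3.05 immediately pass from this discrete statement back to integrals of monotone functions; in effect your method proves something very close to Lemma~3.04 directly and could shorten that chain. Your handling of the degenerate cases ($A=0$, intervals where $f\equiv 0$, the $0\ln 0$ convention at $x=0$) is careful and adequate.
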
 
\begin{proof} Given vectors $a,b \in R^n$, recall from ~\cite{MO} that $a$ majorizes $b$ provided \\ $\sum_{i=1}^k a_{(i)} \geq \sum_{i=1}^k b_{(i)}$ for all $k = 1 \dots (n-1)$ with equality for $k=n$ and where the components of $a$ and $b$ have been sorted in descending order so $a_{(1)} \geq a_{(2)} \geq \dots \geq a_{(n)}$ and $b_{(1)} \geq b_{(2)} \geq \dots \geq b_{(n)}$.  Alternatively, $a$ majorizes $b$ if vector $b$ can be obtained from vector $a$ by a sequence of "transfers" that allow us to change a vector $a = (a_1,a_2, \dots,a_i, \dots,a_j, \dots, a_n)$ into a vector $b = (a_1,a_2, \dots,a_i+\Delta, \dots,a_j -\Delta, \dots, a_n)$ provided $a_i \leq a_j$ and $\Delta \leq a_j-a_i$. 

We now show that the vector $(A, y_2 t_1, y_3 t_2, \dots , y_n t_{n-1})$ majorizes \\ $(y_1 t_1, y_2 t_2, \dots , y_n t_n)$ so the required inequality follows immediately from Karamata's Inequality ~\cite{MO} and the convexity of $x \ln x $ on $[0,\infty)$.  Since $A = y_n (t_n-t_{n-1}) +\sum_{i=1}^{n-1} y_i (t_i-t_{i-1})$ and  $ \sum_{i=1}^{n-1} y_i (t_i-t_{i-1})  \geq y_n\sum_{i=1}^{n-1}  (t_i-t_{i-1}) = y_n t_{n-1}$, we can transfer an amount $\Delta =y_n (t_n-t_{n-1})$ from the first to the last entry of $V_1=(A, y_2 t_1, y_3 t_2, \dots , y_n t_{n-1})$ which means that $V_1$ majorizes $V_2$ $=$  $(A-\Delta, y_2 t_1, y_3 t_2, \dots , y_n t_{n-1} +\Delta)$ $ =$ $ (\sum_{i=1}^{n-1} y_i (t_i-t_{i-1}), y_2 t_1, y_3 t_2, \dots , y_n t_n) $. Now transfer an amount $\Delta =y_{n-1} (t_{n-1}-t_{n-2})$ from the first to the 2nd last entry of $V_2$ and continue similarly with the other entries to obtain the required majorization. 
\end{proof}

\begin{lemma}\label{L:3.02} Suppose that $y_1 \geq y_2 \geq \dots \geq y_n > 0$ and $ 0 =t_0 < a = t_1 \leq t_2 \leq \dots \leq t_n =b$. Let $f$ be the function whose value is $y_i$ on $[t_{i-1},t_i)$ and let $P = \int_0^a f = y_1a$ and $Q = \int_a^b f$.  Then $-\int_a^b f(1+ln(xf))dx\geq PlnP -(P+Q)ln(P+Q)$
\end{lemma}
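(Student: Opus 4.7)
The plan is to imitate the proof of Lemma 3.01: evaluate the left side in closed form, reduce the inequality to a Karamata-type statement about the convex function $\phi(u)=u\ln u$, and then establish the required majorization by transfers.

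First I would evaluate the left side explicitly. Splitting $\int_a^b f(1+\ln(xf))\,dx$ into the intervals $[t_{i-1},t_i)$ for $i=2,\dots,n$ where $f\equiv y_i$, and using the antiderivative $\int \ln(xy_i)\,dx = x\ln(xy_i)-x$, the constant $-1$ contributes $-Q$ and the $-x$ boundary terms contribute $+Q$; these cancel, leaving
\[
-\int_a^b f(1+\ln(xf))\,dx \;=\; \sum_{i=2}^n \bigl(t_{i-1}y_i\ln(t_{i-1}y_i)-t_iy_i\ln(t_iy_i)\bigr).
\]
With $\phi(u)=u\ln u$, the desired inequality then rearranges to
\[
\phi(P+Q)+\sum_{i=2}^n\phi(t_{i-1}y_i)\;\ge\;\phi(P)+\sum_{i=2}^n\phi(t_iy_i).
\]

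Since $\phi$ is convex on $[0,\infty)$, by Karamata's inequality it suffices to check that the vector $V_1=(P+Q,\,t_1y_2,t_2y_3,\dots,t_{n-1}y_n)$ majorizes $V_2=(P,\,t_2y_2,t_3y_3,\dots,t_ny_n)$. The coordinate sums already agree, since $Q=\sum_{i=2}^n y_i(t_i-t_{i-1})$. For the majorization itself I would use the transfer characterization from Lemma 3.01: sequentially for $i=n,n-1,\dots,2$, transfer $\Delta_i = y_i(t_i-t_{i-1})$ from the first coordinate to the $i$th. Each such transfer turns the $i$th entry from $t_{i-1}y_i$ into $t_iy_i$ and reduces the first entry by $\Delta_i$, so after all transfers the first coordinate is $P+Q-Q=P$ and the rest is $V_2$. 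To verify validity of the $i$th transfer, the first coordinate just before it equals $y_1t_1+\sum_{j=2}^{i}y_j(t_j-t_{j-1})$, and since each $y_j\ge y_i$ for $j\le i$ this is bounded below by $y_it_1+y_i(t_i-t_1)=y_it_i$; hence the gap to the $i$th coordinate $t_{i-1}y_i$ is at least $y_i(t_i-t_{i-1})=\Delta_i$, which is exactly the required condition $\Delta_i\le a_j-a_i$.

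The main obstacle is the bookkeeping in the transfer step: one has to pick a valid sequential order and confirm, at each stage, that the shrinking first coordinate still exceeds the target entry by the needed margin. Once that is done, applying Karamata's inequality to $\phi(u)=u\ln u$ with the established majorization $V_1\succeq V_2$ immediately yields the claimed bound. Conceptually this is the same device as in Lemma 3.01, but with the starting and ending values of the transferred bulk being $P+Q$ and $P$ rather than $A$ and $0$.
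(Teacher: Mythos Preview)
Your argument is correct, but you have unnecessarily re-proved Lemma~3.01 inside this proof. After your evaluation
\[
-\int_a^b f(1+\ln(xf))\,dx=\sum_{i=2}^n\bigl(t_{i-1}y_i\ln(t_{i-1}y_i)-t_iy_i\ln(t_iy_i)\bigr),
\]
add in the $i=1$ term, which equals $-P\ln P$ since $t_0=0$ and $t_1y_1=P$. The resulting full sum $\sum_{i=1}^n(\dots)$ is exactly the left side of Lemma~3.01 with $A=\sum_{i=1}^n y_i(t_i-t_{i-1})=P+Q$, so that lemma gives the bound $-(P+Q)\ln(P+Q)$ immediately and you are done. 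This is what the paper does.

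Your majorization $V_1\succ V_2$ is not merely ``conceptually the same device'' as Lemma~3.01; it \emph{is} the majorization from that lemma verbatim, because $P+Q=A$ and $P=y_1t_1$. (Your remark that the first coordinate ended at $0$ in Lemma~3.01 is a slip: it ended at $y_1t_1$ there too.) So there is no need to redo the transfer bookkeeping.
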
 
\begin{proof} Let $g$ be the constant function whose value is $C > 0$ on $[u,v]$ where $0\leq u<v$.  Then $-\int_u^v g(1+ln(xg))dx = (Cu)ln(Cu)-(Cv) ln(Cv)$ with the usual convention that $0 \ln 0 =0$ in the case when $u=0$. Then $-\int_a^b f(1+ln(xf))dx = -\int_0^b f(1+ln(xf))dx +\int_0^a f(1+ln(xf))dx = \sum_{i=1}^n (y_i t_{i-1}  ln( y_i t_{i-1} )- y_i t_i ln(y_i t_i))+PlnP \geq -(P+Q)ln(P+Q) + PlnP $, after applying Lemma~\ref{L:3.01} with $A=P+Q$.

\end{proof} 

\begin{lemma}\label{L:3.03} Let $f$ be a positive monotone decreasing function on $[a,b]$ where $0 <  a < b$ . Let $P = af(a)$ and $Q = \int_a^b f dx$.  Then $-\int_a^b f(1+ln(xf))dx\geq PlnP -(P+Q)ln(P+Q)$.
\end{lemma}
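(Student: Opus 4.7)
The plan is to approximate $f$ from above by a sequence of monotone decreasing step functions, apply Lemma~\ref{L:3.02} to each, and pass to the limit as the mesh shrinks to zero. This is natural because Lemma~\ref{L:3.02} already has exactly the right shape: an extra interval $[0,a)$ of constant height is available to carry the value $f(a)$ and produce $P=af(a)$ on the nose.

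For the setup I would partition $[a,b]$ uniformly as $a=u_0<u_1<\cdots<u_N=b$ for each $N\geq 1$ and define $f_N:[0,b]\to(0,\infty)$ by $f_N\equiv f(a)$ on $[0,a)$ and $f_N\equiv f(u_{k-1})$ on $[u_{k-1},u_k)$ for $k=1,\dots,N$. Since $f$ is monotone decreasing, so is $f_N$, and $f_N\geq f$ on $[a,b]$. Taking $t_0=0$, $t_i=u_{i-1}$ for $1\leq i\leq N+1$, $y_1=f(a)$, and $y_i=f(u_{i-2})$ for $2\leq i\leq N+1$ satisfies the hypotheses of Lemma~\ref{L:3.02}: the $y_i$ are positive and weakly decreasing, the $P$-value is $y_1 a = af(a)$ matching the current $P$ exactly, and the $Q$-value is the upper Riemann sum $Q_N=\sum_{k=0}^{N-1} f(u_k)(u_{k+1}-u_k)$ for $Q$. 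The lemma then delivers
\begin{equation*}
-\int_a^b f_N(1+\ln(xf_N))\,dx \;\geq\; P\ln P - (P+Q_N)\ln(P+Q_N).
\end{equation*}

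To finish I pass to the limit $N\to\infty$. On the right, monotone functions are Riemann integrable so $Q_N\to Q$, and continuity of $x\ln x$ then gives $(P+Q_N)\ln(P+Q_N)\to(P+Q)\ln(P+Q)$. On the left, the bounds $f(b)\leq f_N\leq f(a)$ together with $x\in[a,b]$ confine $xf_N$ to the compact set $[af(b),bf(a)]\subset(0,\infty)$, so the integrands $f_N(1+\ln(xf_N))$ are uniformly bounded. Since $f$ is monotone it has at most countably many discontinuities, so $f_N\to f$ almost everywhere on $[a,b]$, and the bounded convergence theorem gives $\int_a^b f_N(1+\ln(xf_N))\,dx\to\int_a^b f(1+\ln(xf))\,dx$. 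The inequality survives the limit, yielding the lemma.

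I do not anticipate a serious obstacle; the argument is essentially bookkeeping. The one subtlety worth emphasizing is that $P=af(a)$ must be reproduced \emph{exactly} rather than approximated, which is what dictates holding $f_N\equiv f(a)$ on $[0,a)$ for every $N$ rather than approximating $f$ on that piece as well. Everything else is a routine dominated/bounded-convergence argument enabled by the fact that $a>0$ keeps $x$ bounded away from zero, so $\ln(xf)$ has no singularity to worry about.
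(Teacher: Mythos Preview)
Your proof is correct and follows essentially the same strategy as the paper: approximate $f$ by monotone decreasing step functions, apply Lemma~\ref{L:3.02}, and pass to the limit. The only cosmetic differences are that the paper invokes uniform convergence of the step approximants (and hence of the integrands) rather than bounded convergence, and it simply lets $P_k=af_k(a)\to P$ rather than pinning $P$ exactly; your remark that $P$ ``must'' be reproduced exactly is therefore a bit overstated, but it does no harm to your argument.
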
 
\begin{proof} By  ~\cite{SH}, there exists a sequence of monotone decreasing step functions $f_k$ that converge uniformly to $f$ with $0 < f(b) \leq f_k(x) \leq f(a)$ for every $k$ and every $x \in [a,b]$. By standard results, the sequence $ f_k(1+ln(xf_k)$ converges uniformly to  $ f(1+ln(xf)$ on $[a,b]$. Let $P_k = af_k(a)$ and  $Q_k  = \int_a^b f_k dx$.  By  Lemma~\ref{L:3.02}, $-\int_a^b f_k(1+ln(xf_k))dx\geq P_klnP_k -(P_k+Q_k)ln(P_k+Q_k)$ and the result now follows by taking the limit as $k \rightarrow \infty$ of both sides of this inequality.
\end{proof}

\begin{lemma}\label{L:3.04} Let $f$ be a positive monotone decreasing function on $(0,b]$ for which $Q = \int_0^b f dx$ is finite.  Then $-\int_0^b f(1+ln(xf))dx\geq  -QlnQ$.
\end{lemma}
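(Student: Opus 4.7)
The plan is to deduce Lemma 3.04 from Lemma 3.03 by a limiting argument as $a \to 0^+$. For each $a \in (0,b)$, Lemma 3.03 gives
$$-\int_a^b f(1+\ln(xf))\,dx \;\geq\; P_a \ln P_a - (P_a + Q_a)\ln(P_a + Q_a),$$
where $P_a = af(a)$ and $Q_a = \int_a^b f\,dx$. The strategy is to show that both sides have well-defined limits as $a \to 0^+$, and that the right-hand side tends to $-Q\ln Q$.

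The key auxiliary estimate is that $P_a = af(a) \to 0$. Because $f$ is monotone decreasing, $\int_{a/2}^a f(x)\,dx \geq (a/2)f(a)$, so $af(a) \leq 2\int_{a/2}^a f\,dx$, and the right side tends to $0$ by absolute continuity of the Lebesgue integral (using that $\int_0^b f = Q < \infty$). The same estimate shows $xf(x) \to 0$ as $x \to 0^+$. Combined with $Q_a \to Q$ and the convention $0\ln 0 = 0$, this gives $P_a\ln P_a \to 0$ and $(P_a+Q_a)\ln(P_a+Q_a) \to Q\ln Q$, so the right-hand side of the displayed inequality converges to $-Q\ln Q$.

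For the left-hand side, the main subtlety is that the integrand $-f(1+\ln(xf))$ is not nonnegative on all of $(0,b]$, so one cannot apply monotone convergence globally. However, since $xf(x) \to 0$, one can pick $a_0 \in (0,b)$ with $xf(x) < 1/e$ for all $x \in (0,a_0]$, so that $-f(1+\ln(xf))$ is nonnegative on $(0,a_0]$. Splitting
$$-\int_a^b f(1+\ln(xf))\,dx \;=\; -\int_a^{a_0} f(1+\ln(xf))\,dx \;-\; \int_{a_0}^b f(1+\ln(xf))\,dx,$$
the second integral is a fixed finite quantity, and the first integrand is nonnegative so monotone convergence applies as $a \to 0^+$, giving a limit in $[0, +\infty]$. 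Hence the full limit of the left-hand side exists in $(-\infty, +\infty]$ and equals $-\int_0^b f(1+\ln(xf))\,dx$. Passing to the limit in the inequality yields the result; if this integral is $+\infty$ the conclusion is trivial, and if finite it gives exactly $-\int_0^b f(1+\ln(xf))\,dx \geq -Q\ln Q$.

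The main obstacle is establishing $af(a) \to 0$ and the corresponding fact $xf(x) \to 0$ near the origin, since these underwrite both the convergence of the right-hand side and the sign control needed to justify monotone convergence on the left-hand side. Everything else is a routine passage to the limit in Lemma 3.03.
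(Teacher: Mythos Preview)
Your proof is correct and follows essentially the same approach as the paper: apply Lemma~3.03 on $[a,b]$ and let $a\to 0^+$. The paper's version is terser, simply asserting $\lim_{a\to 0^+}P_a=0$ from finiteness of $Q$ and then ``taking the limit on both sides,'' whereas you supply the monotonicity estimate $af(a)\le 2\int_{a/2}^a f$ and the sign-splitting/monotone-convergence justification for the left-hand side that the paper leaves implicit.
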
 
\begin{proof} Suppose $a$ is between $0$ and $b$ and let $P_a = af(a)$, $Q_a = \int_a^b f dx$. Since $Q$ is finite, $lim_{a\rightarrow 0^+} P_a =0$. By Lemma~\ref{L:3.03},  $-\int_a^b f(1+ln(xf))dx\geq P_alnP_a -(P_a+Q_a)ln(P_a+Q_a)$ and the result follows by taking the limit as $a\rightarrow 0^+$ on both sides of this inequality. 
\end{proof}

\begin{lemma}\label{L:3.05} Let $f$ be a positive monotone decreasing function on $(0,\infty)$ for which $Q = \int_0^{\infty} f dx$ is finite.  Then $-\int_0^{\infty} f(1+ln(xf))dx \geq  -QlnQ$.
\end{lemma}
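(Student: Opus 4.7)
The plan is to extend the inequality from Lemma~\ref{L:3.04}, which already covers the interval $(0,b]$, to the whole half-line by letting $b\to\infty$ and applying monotone convergence on a suitable tail. The essential observation is that for a positive monotone decreasing integrable function $f$, the product $xf(x)$ vanishes at infinity, so the integrand $-f(1+\ln(xf))$ is eventually nonnegative and the tail of the integral behaves well under limits.

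First I would verify that $xf(x)\to 0$ as $x\to\infty$. Since $f$ is positive and monotone decreasing, $\int_{x/2}^{x} f\,dt \geq (x/2)f(x)$, and the finiteness of $Q$ forces the left side to $0$. Hence I can choose $A>0$ such that $xf(x)\leq 1/e$ for every $x\geq A$; then $1+\ln(xf(x))\leq 0$, so $-f(x)(1+\ln(xf(x)))\geq 0$ on $[A,\infty)$. Next, for each $b>A$, the restriction $f|_{(0,b]}$ still satisfies the hypotheses of Lemma~\ref{L:3.04}, so with $Q_b=\int_0^b f\,dx$ one obtains
$$-\int_0^b f(1+\ln(xf))\,dx \;\geq\; -Q_b\ln Q_b,$$
and the left-hand side splits as
$$-\int_0^A f(1+\ln(xf))\,dx \;+\; \int_A^b \bigl[-f(1+\ln(xf))\bigr]\,dx.$$
The first summand is independent of $b$; on $[A,b]$ the integrand of the second is nonnegative, so by the monotone convergence theorem it increases, as $b\to\infty$, to $\int_A^\infty[-f(1+\ln(xf))]\,dx\in[0,\infty]$.

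Finally, since $Q_b\to Q$ and $x\mapsto x\ln x$ is continuous, $-Q_b\ln Q_b\to -Q\ln Q$. Passing to the limit $b\to\infty$ in the inequality from Lemma~\ref{L:3.04} and recombining the two pieces of the left-hand side yields the desired bound $-\int_0^\infty f(1+\ln(xf))\,dx\geq -Q\ln Q$.

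The only mildly delicate point, which I expect to be the main (still routine) obstacle, is ensuring every integral that appears is meaningful. The positive part of $f(1+\ln(xf))$ is integrable on $(0,\infty)$: by monotonicity of $f$ one has $xf(x)\leq\int_0^x f\leq Q$, hence $f(1+\ln(xf))\leq f(1+\ln Q)$, which is dominated by the integrable function $f(1+\ln Q)^+$. Thus the full integral is well-defined as an element of $[-\infty,\infty)$, and the splitting plus limit argument above is legitimate (if $-\int_0^A$ happens to be $+\infty$, the claimed inequality is trivial).
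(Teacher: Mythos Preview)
Your argument is correct and follows the same route as the paper: apply Lemma~\ref{L:3.04} on $(0,b]$ and let $b\to\infty$. The paper's proof is a one-liner that simply asserts the limit passage, whereas you supply the missing justification (that $xf(x)\to 0$, so the integrand is eventually nonnegative and monotone convergence applies on the tail, together with the observation that the positive part of $f(1+\ln(xf))$ is integrable so the full integral is well defined in $[-\infty,\infty)$).
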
 
\begin{proof}  Let $Q_t = \int_0^t f dx$ for $t > 0$,. By Lemma~\ref{L:3.04},  $-\int_0^t f(1+ln(xf))dx \geq - Q_t ln(Q_t)$ and the result follows by taking the limit as $t\rightarrow \infty$ on both sides of this inequality. 
\end{proof}
\begin{theorem}\label{T:3.01} (The Genial Entropy Inequality) For any corner density $f:I \rightarrow [0,\infty)$, $-1-\int_I f(ln(xf)dx\geq 0$.
\end{theorem}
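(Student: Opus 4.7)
The plan is to reduce the claim to a direct application of Lemma~\ref{L:3.04} or Lemma~\ref{L:3.05}, exploiting the fact that a corner density has total integral $1$. Writing
\begin{equation*}
G(f) = -1 - \int_I f \ln(xf)\,dx = -\int_I f \bigl(1 + \ln(xf)\bigr)\,dx,
\end{equation*}
where the second equality uses $\int_I f\,dx = 1$, the goal becomes showing that $-\int_I f(1+\ln(xf))\,dx \geq 0$. This is exactly the conclusion of Lemmas~\ref{L:3.04} and \ref{L:3.05} in the regime $Q = 1$, where the right-hand side $-Q\ln Q$ equals $0$.

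Next, I would split on the shape of $I$. Since $f$ is monotone decreasing with total integral $1$, its positive support $\{x \in I : f(x) > 0\}$ is an interval whose left endpoint is $0$ and whose right endpoint is some $c \in (0,\infty]$. Because $0\ln 0 = 0$ by convention, the integrand $f(1+\ln(xf))$ vanishes on $\{f=0\}$, so restricting the integral to the positive support changes nothing. If $c < \infty$, I apply Lemma~\ref{L:3.04} to $f$ on $(0,c]$ with $b = c$ and $Q = 1$; if $c = \infty$, I apply Lemma~\ref{L:3.05} to $f$ on $(0,\infty)$ with $Q = 1$. In either case the lemma gives $-\int f(1+\ln(xf))\,dx \geq -Q\ln Q = 0$, which is precisely $G(f) \geq 0$.

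The main obstacle is mostly bookkeeping around edge cases: whether $I$ contains its left endpoint $0$, whether $f$ vanishes at the right endpoint or merely decays to $0$, and whether the support endpoint $c$ lies inside $I$. All of these are handled uniformly by trimming to the positive support and invoking the $0\ln 0 = 0$ convention, so no new analytic content beyond the preceding chain of lemmas is required—the nontrivial work (Karamata's inequality and the passage from step functions to monotone functions via uniform approximation) has already been done in Lemmas~\ref{L:3.01}--\ref{L:3.05}.
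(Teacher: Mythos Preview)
Your proposal is correct and follows essentially the same approach as the paper, which simply states ``Follows from Lemma~\ref{L:3.05} by taking $Q=1$.'' You are in fact more careful than the paper: you explicitly split into the bounded case (Lemma~\ref{L:3.04}) and the unbounded case (Lemma~\ref{L:3.05}), and you handle the trimming to the positive support via the $0\ln 0 = 0$ convention, whereas the paper's one-line proof glosses over these bookkeeping details.
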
 
\begin{proof} Follows from Lemma~\ref{L:3.05} by taking $Q=1$.
\end{proof}
If the density function of a random variable happens to be a corner density, then the Genial Entropy Inequality gives a new lower bound for the differential entropy.
\begin{theorem}\label{T:3.02} Let $X$ be a random variable whose pdf is a corner density $f:I \rightarrow [0,\infty)$ and let $h(X)=-\int_I fln(f)dx$ be the differential entropy of $X$. Then $h(X) \geq 1+ E(\ln X)$.
\end{theorem}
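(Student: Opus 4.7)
The plan is to unwind the definition of genial entropy and read the desired inequality off directly from the Genial Entropy Inequality (Theorem~\ref{T:3.01}). This proof is essentially one line because all the work has already been done in Section~\ref{S:inequality}.

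First I would split the logarithm in the integrand of the genial entropy using $\ln(xf(x)) = \ln(x) + \ln(f(x))$, which is valid on the support of $f$ with the usual convention that the integrand vanishes wherever $f=0$. This gives
\begin{equation*}
G(f) \;=\; -1 - \int_I f(x)\ln(xf(x))\,dx \;=\; -1 - \int_I f(x)\ln(x)\,dx - \int_I f(x)\ln(f(x))\,dx.
\end{equation*}
Next I would identify the two resulting integrals in probabilistic terms: $\int_I f(x)\ln(x)\,dx = E(\ln X)$ by the law of the unconscious statistician, and $-\int_I f(x)\ln(f(x))\,dx = h(X)$ by the definition of differential entropy given in the theorem. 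So $G(f) = -1 - E(\ln X) + h(X)$.

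Finally I would apply Theorem~\ref{T:3.01}, which asserts $G(f) \geq 0$ for any corner density. Substituting and rearranging yields $h(X) \geq 1 + E(\ln X)$, as required. Justifying the term-by-term splitting of the integral requires only that the two integrals $\int_I f\ln(x)\,dx$ and $\int_I f\ln(f)\,dx$ exist separately; since $h(X)$ is assumed to exist by hypothesis and $E(\ln X)$ must be understood as existing (possibly as $-\infty$, in which case the inequality is trivial), this is the only small technical point and is not really an obstacle.
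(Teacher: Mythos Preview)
Your proof is correct and is exactly the approach the paper takes: the paper's own proof reads in its entirety ``Follows from Theorem~\ref{T:3.01},'' and you have simply written out the one-line unpacking that makes this explicit. There is no difference in method.
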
 
\begin{proof} Follows from Theorem~\ref{T:3.01}.
\end{proof}

\section{The Slide Derivatives}\label{S:calculus}

With each corner density, we now associate a function called a slide function that describes how the genial entropy changes as the density is deformed to a constant function. In Section~\ref{S:slide}, the slide numbers will be defined as the derivatives of a particular slide functions at $0$. 

\begin{definition}\label{D:4.00} Suppose  $f:(0,b) \rightarrow (0,\infty)$ is monotone decreasing and $A(t) =\int_0^b (f(x))^t dx$.Then  $\sigma_f(t) = G\big(\frac{(f(x))^t}{A(t)} \big) = -1 -\int_0^b \frac{f(x))^t}{A(t)} \ln(x\frac{f(x))^t}{A(t)})dx$ will be called the slide function of $f$ and its domain will be the set of all $t \geq 0$ at which $A$ and $\sigma_f$ both exist.
\end{definition}

By the Genial Entropy Inequality in Theorem~\ref{T:3.01}, we always have $\sigma_f(t)\geq 0$. Note also that $\sigma_f(0)=0$ and if $\int_0^b f=1$ then $\sigma_f(1) = G(f)$. The next theorem says that the function $\sigma_f(t)$ is invariant under changes of scale. It follows from a simple change of variables argument similar to the proof of Proposition~\ref{P:2.02}. 

\begin{theorem}\label{T:4.01} Suppose  $f:(0,b) \rightarrow (0,\infty)$ is monotone decreasing and  $g:(0,\lambda b) \rightarrow (0,\infty)$ is defined by $g(z)=\beta f(\frac{z}{\lambda})$ for some  $\lambda >0$ and any $\beta >0$. Then $\sigma_f=\sigma_g$. 
\end{theorem}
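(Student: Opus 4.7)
The plan is to reduce Theorem 4.01 directly to Proposition 2.02, which already established scale invariance of the genial entropy for corner densities. The key observation is that, for each fixed admissible $t$, the function $h_f(x) = (f(x))^t / A_f(t)$ appearing inside the definition of $\sigma_f(t)$ is itself a corner density on $(0,b)$ (it is nonnegative, monotone decreasing since $f$ is positive and monotone decreasing and $t \geq 0$, and integrates to $1$ by construction), and by Definition 4.00 we have $\sigma_f(t) = G(h_f)$. So it suffices to show that the corresponding corner density $h_g$ attached to $g$ is obtained from $h_f$ by precisely the scaling $h \mapsto \lambda^{-1} h(\cdot / \lambda)$ appearing in Proposition 2.02.

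First I would compute the normalizing integral for $g$. Substituting $z = \lambda x$ into
$$A_g(t) = \int_0^{\lambda b} \bigl(\beta\, f(z/\lambda)\bigr)^t\, dz$$
gives $A_g(t) = \beta^t \lambda\, A_f(t)$; in particular $A_g(t)$ exists exactly when $A_f(t)$ does, so the domains of $\sigma_f$ and $\sigma_g$ coincide. Dividing, the vertical scale $\beta^t$ cancels cleanly:
$$h_g(z) \;=\; \frac{(g(z))^t}{A_g(t)} \;=\; \frac{\beta^t (f(z/\lambda))^t}{\beta^t\, \lambda\, A_f(t)} \;=\; \frac{1}{\lambda}\cdot \frac{(f(z/\lambda))^t}{A_f(t)} \;=\; \frac{1}{\lambda}\, h_f\!\bigl(z/\lambda\bigr).$$
This is exactly the relationship between $h$ and its $\lambda$-rescaling in the statement of Proposition 2.02. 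The fact that $\beta$ drops out is the reason the theorem can afford to allow an arbitrary vertical rescaling as well as a horizontal one.

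Applying Proposition 2.02 to the corner density $h_f$ with scale factor $\lambda$ therefore gives $G(h_g) = G(h_f)$, which by definition reads $\sigma_g(t) = \sigma_f(t)$ for every $t$ in the common domain.

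There is no real obstacle here; the only thing to be careful about is confirming that $h_f$ genuinely qualifies as a corner density across the relevant range of $t$ (monotone decreasing and integrable), but both properties follow immediately from the hypotheses on $f$ and $t \geq 0$. The entire argument is a routine change-of-variables bookkeeping exercise that parallels Proposition 2.02 almost verbatim, with the extra parameter $\beta$ disappearing in the normalization.
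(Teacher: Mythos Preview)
Your proof is correct and matches the paper's intended argument: the paper itself does not write out a detailed proof but simply remarks that the result ``follows from a simple change of variables argument similar to the proof of Proposition~\ref{P:2.02},'' which is precisely what you carry out. Your explicit verification that $A_g(t)=\beta^t\lambda A_f(t)$ and hence $h_g(z)=\lambda^{-1}h_f(z/\lambda)$, followed by an appeal to Proposition~\ref{P:2.02}, is exactly the change-of-variables bookkeeping the paper has in mind.
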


We now show that under mild conditions there must be an $s>0$ for which the interval $[0,s]$ is contained in the domain of $\sigma_f$ and furthermore that $\sigma_f$ must be continuous from the right at $0$.

\begin{lemma}\label{L:4.01} Suppose  $f:(0,b) \rightarrow (0,\infty)$ is monotone decreasing with $\int_0^b (f(x))^s dx < \infty$ for some $s >0$. Then 
$\sigma_f$ is defined on $[0,s/2]$ and continuous from the right at $0$.
\end{lemma}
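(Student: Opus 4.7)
\emph{The plan.} For $\sigma_f(0)$ to exist one needs $A(0)=b$ finite, so I treat $b<\infty$ as implicit in the hypothesis. The task splits into three parts: (a) $A(t)<\infty$ for $t\in[0,s/2]$, (b) the integrand defining $\sigma_f(t)$ is absolutely integrable on $(0,b)$ for each such $t$, and (c) $\sigma_f(t)\to 0$ as $t\to 0^+$. I will expand
\[
\sigma_f(t) = -1 + \ln A(t) - \frac{1}{A(t)}\int_0^b f(x)^t \ln x\,dx - \frac{1}{A(t)}\int_0^b t\,f(x)^t \ln f(x)\,dx
\]
and verify the three requirements term by term.

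\emph{Key bounds.} The basic estimate $f^t \leq f^s \mathbf{1}_{\{f\geq 1\}} + \mathbf{1}_{\{f<1\}} \leq f^s + 1$ for $t\in[0,s]$ gives $A(t)\leq A(s)+b$. For the $\ln x$ integral, applying Hölder's inequality with conjugate exponents $p=s/t$ and $q=s/(s-t)$ (for $t>0$) yields
\[
\int_0^b f^t |\ln x|\,dx \leq A(s)^{t/s}\Bigl(\int_0^b |\ln x|^q\,dx\Bigr)^{(s-t)/s},
\]
finite because $|\ln x|$ has all moments on the bounded interval $(0,b)$. For the $t f^t \ln f$ integral, I split at $f=1$: on $\{f\geq 1\}$ the inequality $\ln y \leq y$ applied to $y=f^{s/2}$ yields $\ln f \leq (2/s)f^{s/2}$, so $t f^t\ln f \leq (2t/s)\, f^{s/2+t} \leq (2t/s)\, f^s$ when $t\leq s/2$; on $\{f<1\}$ the substitution $v=-t\ln f$ reduces $t f^t|\ln f|$ to $v e^{-v}$, with the clean uniform bound $v e^{-v}\leq 1/e$.

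\emph{Passing to the limit.} For continuity at $0$ I plan to apply the dominated convergence theorem to each integral using the dominating functions $(f^{s/2}+1)|\ln x|$ (integrable by a second application of Hölder with $p=q=2$) for $f^t|\ln x|$, and $f^s\mathbf{1}_{\{f\geq 1\}} + (1/e)\mathbf{1}_{\{f<1\}}$ for $t f^t|\ln f|$; both bounds hold uniformly for $t\in[0,s/2]$ and are integrable on $(0,b)$. The pointwise limits $f(x)^t\to 1$, $f^t\ln x\to\ln x$, and $t f^t\ln f\to 0$ then yield $A(t)\to b$, $\int_0^b f^t\ln x\,dx\to b\ln b - b$, and $\int_0^b t f^t\ln f\,dx\to 0$, combining to give $\sigma_f(t)\to -1 + \ln b - (\ln b - 1) - 0 = 0$. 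The main obstacle is finding a dominating function for $t f^t\ln f$ on $\{f<1\}$ that does not blow up as $t\to 0^+$; the identification with $v e^{-v}$ via $v=-t\ln f$ supplies the necessary uniform bound, and the remaining estimates are routine consequences of Hölder's inequality and $\ln y\leq y$.
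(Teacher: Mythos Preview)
Your argument is correct. The overall architecture matches the paper's: expand $\sigma_f(t)$ into the $\ln A(t)$ piece, the $\int f^t\ln x$ piece, and the $\int f^t\ln f^t$ piece, then control each separately. The technical execution, however, differs in two places worth noting. First, where the paper bounds $|f^t\ln f^t|$ via the inequality $z-1\le z\ln z\le z(z-1)$ and then invokes Cauchy--Schwarz (``product of square integrable functions''), you instead split at $\{f\ge 1\}$ and $\{f<1\}$ and use $\ln y\le y$ together with the change of variable $v=-t\ln f$ giving $v e^{-v}\le 1/e$; your bound is sharper and makes the dominating function completely explicit. Second, for the passage $t\to 0^+$ the paper argues by hand, splitting $[0,1]$ into $[0,c]\cup[c,d]\cup[d,1]$ and using uniform convergence on the middle piece, whereas you go straight to dominated convergence with the majorants $(f^{s/2}+1)|\ln x|$ and $f^s\mathbf{1}_{\{f\ge 1\}}+(1/e)\mathbf{1}_{\{f<1\}}$. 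Your route is cleaner and avoids the ad hoc three-interval partition; the paper's route is more elementary in that it never names DCT. Both approaches require $t\le s/2$ for the same underlying reason (so that $f^{t+s/2}\le f^s$ or equivalently $f^{2t}\le f^s$), and your observation that $b<\infty$ is implicitly needed for $\sigma_f(0)$ to make sense is accurate---the paper handles this by silently reducing to $b=1$ via scale invariance.
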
 
\begin{proof} 

We can assume $b=1$ by Proposition~\ref{P:2.02}. Then for $t \in [0,s]$, the function $A(t)$ in Definition~\ref{D:4.00} is finite since $A(t) =\int_0^1 (f(x))^t dx \leq \int_0^1 1+(f(x))^s dx =1+ A(s) < \infty$. To see that $lim_{t\rightarrow 0^+} A(t) =1$, choose $c$ and $d$ with $0 < c < d < 1$ and consider $|A(t)-1|=|\int_0^1 (f(x))^t -1 dx | \leq \int_0^1 |(f(x))^t -1 |dx \leq \int_0^c  (1+f(x)^s) dx + \int_c^d |(f(x))^t -1| dx +\int_d^1 (1+f(x)^s) dx$. In this last sum, the first and third terms are independent of $t$ and can be made as small as desired by choosing $c$ and $d$ appropriately.  Since $|(f(x))^t -1 |$ converges uniformly to $0$ on $[c,d]$ as $t\rightarrow 0^+$, the results follow.

We now show that $\sigma_f(t)$ is finite for $t \in [0,s/2]$ and continuous from the right at $0$ as follows: 
\begin{equation}\notag
\begin{split}
0&\leq \sigma_f(t)\\
&= -1 -\int_0^1 \frac{(f(x))^t}{A(t)} \ln(x\frac{(f(x))^t}{A(t)})dx\\
&=-1 - \frac{1}{A(t)} \int_0^1 (f(x))^t ln(x) dx- \frac{1}{A(t)} \int_0^1 (f(x))^t ln((f(x))^t) dx \\
&+ \frac{1}{A(t)} \int_0^1 (f(x))^t \ln(A(t)) dx \\
&=\frac{1}{A(t)} \big(1-A(t)+A(t)\ln (A(t)) -\int_0^1 ((f(x))^t-1) ln(x) dx \\
&-\int_0^1 (f(x))^t ln((f(x))^t) dx \big)\\
&\leq \frac{1}{A(t)} \big(|1-A(t)+A(t)\ln (A(t)| +\int_0^1 |(f(x))^t-1|| \ln(x)| dx \\
&+\int_0^1 |(f(x))^t \ln((f(x))^t)| dx \big)
\end{split}
\end{equation}

It remains to show that each of the three terms in this last sum is finite for $t \in [0,s/2]$ and goes to $0$ as $t\rightarrow 0^+$. For the first term, clearly \\  $\lim_{t\rightarrow 0^+} \big(1-A(t)+A(t)\ln (A(t) \big)=0$.

For the second term, $\int_0^1 |(f(x))^t-1|| \ln(x)|dx$ converges since the integrand is the product of square integrable functions. To see that $\lim_{t\rightarrow 0^+} \int_0^1 |(f(x))^t-1|| \ln(x)| dx =0$, choose $c$ and $d$ with $0 < c < d < 1$ and consider the inequality
$\int_0^1 |(f(x))^t-1|| \ln(x)| dx \leq \int_0^c (1+f(x)^{s/2})| \ln(x)| dx+\int_c^d |(f(x))^t-1|| \ln(x)| dx+\int_d^1 (1+f(x)^{s/2})| \ln(x)| dx$. Now follow the argument given above for $|A(t)-1|$.

For the third term, use the inequality $z-1 \leq z\ln z \leq z(z-1)$ for $z \geq 0$ to get  $ \int_0^1 |(f(x))^t \ln((f(x))^t)| dx \leq \int_0^1 |(f(x))^t-1| \max(1,(f(x))^t) dx $ which converges since the last integrand is the product of square integrable functions. Now show $lim_{t\rightarrow 0^+} \int_0^1 |(f(x))^t-1| \max(1 , (f(x))^t) dx =0$ as before. 

\end{proof}

The information we wish to extract from a corner density is captured by the derivatives of its slide function at $0$ which we now describe.

\begin{definition} \label{D:4.01} Suppose  $f:(0,b) \rightarrow (0,\infty)$ is monotone decreasing with $\int_0^b (f(x))^s dx < \infty$ for some  $s>0$. Then the $n$'th slide derivative of $f$ is defined by $\psi_n(f)=\frac{d^n \sigma_{f}}{dt^n}(0)$ where all derivatives are taken from the right. If all of these derivatives exist, then the slide series of $f$ is defined to be $\sum_{i=1}^{\infty} \frac{\psi_n(f)}{n!}t^n$.

\end{definition}

Here are some elementary properties of $\psi_n(f)$.

\begin{theorem}\label{T:4.02}  Suppose  $f:(0,b) \rightarrow (0,\infty)$ is monotone decreasing with $\int_0^b (f(x))^s dx < \infty$ for some  $s>0$.
\begin{enumerate}

\item If $\psi_1(f)$ exists then $\psi_1(f) \geq 0$.

\item If $\psi_n(f)$ exists then so does $\psi_n(f^r) $ for $r>0$ and $\psi_n(f^r) =r^n \psi_n(f)  $.  

\item If $f$ is a constant function, then $\psi_n(f) =0$ for all n.
\end{enumerate}
\end{theorem}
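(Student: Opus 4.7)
The three parts are essentially independent, and I would treat them in the order listed, reusing the standard facts already established.

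For part (1), the plan is to exploit the trio: $\sigma_f(t)\geq 0$ for every $t$ in its domain (Theorem~\ref{T:3.01}), $\sigma_f(0)=0$ (noted just after Definition~\ref{D:4.00}), and $\sigma_f$ is continuous from the right at $0$ (Lemma~\ref{L:4.01}). Thus $\sigma_f$ attains a right-sided minimum at $t=0$, and for any $t>0$ in the domain the difference quotient $(\sigma_f(t)-\sigma_f(0))/t=\sigma_f(t)/t$ is $\geq 0$. Taking the limit as $t\to 0^+$ yields $\psi_1(f)=\sigma_f'(0^+)\geq 0$. This is a three-line argument once the pieces are cited.

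For part (2), the key observation is a change-of-exponent identity for the slide function. Writing $A_{f^r}(t)=\int_0^b (f^r(x))^t\,dx=\int_0^b (f(x))^{rt}\,dx=A_f(rt)$, and noting that $(f^r(x))^t/A_{f^r}(t)=(f(x))^{rt}/A_f(rt)$, a direct substitution into the definition gives
\begin{equation*}
\sigma_{f^r}(t)=\sigma_f(rt).
\end{equation*}
Existence of $\psi_n(f^r)$ then follows from the existence of $\psi_n(f)$, since if $\int_0^b(f(x))^s\,dx<\infty$ we can take $s'=s/r$ to see the hypothesis of Lemma~\ref{L:4.01} holds for $f^r$. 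Differentiating $n$ times using the chain rule and evaluating at $0$ gives the desired identity $\psi_n(f^r)=r^n\psi_n(f)$.

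For part (3), if $f\equiv c$ on $(0,b)$ then $(f(x))^t/A(t)=c^t/(c^tb)=1/b$, so the normalized function appearing in the definition of $\sigma_f$ is the constant density $1/b$, independent of $t$. A direct computation of its genial entropy, $-1-\int_0^b (1/b)\ln(x/b)\,dx$, evaluates to $0$ after the substitution $u=x/b$. Hence $\sigma_f\equiv 0$ on its domain and all of its right derivatives at $0$ vanish.

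The main obstacle, if any, is purely technical and occurs in part (2): one must justify that the hypothesis $\int_0^b(f^r)^{s'}<\infty$ needed to invoke Lemma~\ref{L:4.01} for $f^r$ is inherited from that of $f$, and that the identity $\sigma_{f^r}(t)=\sigma_f(rt)$ genuinely holds on a right neighborhood of $0$ (so that differentiating from the right makes sense). Both are straightforward once one writes out the integrals, so no real difficulty is expected.
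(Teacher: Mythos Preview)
Your proposal is correct and follows essentially the same route as the paper's proof: for (1) the paper likewise uses $\sigma_f\geq 0$ with $\sigma_f(0)=0$ to force $\psi_1(f)\geq 0$; for (2) it records $\sigma_{f^r}(t)=\sigma_f(rt)$ and invokes the chain rule; for (3) it notes $\sigma_f\equiv 0$ when $f$ is constant. Your write-up simply supplies more of the routine details (the explicit difference quotient, the computation $A_{f^r}(t)=A_f(rt)$, the check that the integrability hypothesis transfers to $f^r$), none of which change the underlying argument.
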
 
\begin{proof} 

(1) The slide function $\sigma_f$ is non-negative on its domain and $\sigma_f(0)=0$ so the first derivative must be nonnegative.

(2) For $t$ sufficiently small and nonnegative, we have $\sigma_{f^r}(t)=\sigma_f(rt)$ and the result follows from the chain rule.  

(3) If $f$ is a constant function, then $\sigma_f(t)=0$ for all $t \geq 0$.

\end{proof}

Here is an example of a slide derivative calculation that we will connect with the uniform distribution on $[0,1]$ in Section~\ref{S:slide}.

\begin{theorem}\label{T:4.03} Let $f(x)=-\ln(x) $ for $x \in (0,1)$. Then for $t > 0$,  $\sigma_f(t)=-1+t-t\Psi(t)+\log(\Gamma(1+t))$ and the slide derivatives of $f$ are given by $\psi_1(f)=1$ and by  $\psi_n(f)=(-1)^{n+1}(n-1)!(n-1)\zeta(n)$ for $n>1$ .
\end{theorem}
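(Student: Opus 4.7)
The plan is to reduce everything to classical Gamma-integrals via the substitution $u=-\ln x$, obtain the closed form for $\sigma_f(t)$, and then read off the slide derivatives from the known Taylor expansions of $\log\Gamma(1+t)$ and $\Psi$ about $t=0$.

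First I would compute $A(t)=\int_0^1(-\ln x)^t\,dx$. Setting $u=-\ln x$ turns this into $\int_0^\infty u^t e^{-u}\,du=\Gamma(t+1)$. Next, from Definition~\ref{D:4.00} I expand the logarithm inside the integrand to get
\begin{equation*}
\sigma_f(t)=-1-\frac{1}{\Gamma(t+1)}\int_0^1(-\ln x)^t\bigl[\ln x+t\ln(-\ln x)-\ln\Gamma(t+1)\bigr]\,dx.
\end{equation*}
Under the same substitution, $\int_0^1(-\ln x)^t\ln x\,dx=-\Gamma(t+2)=-(t+1)\Gamma(t+1)$ and $\int_0^1(-\ln x)^t\ln(-\ln x)\,dx=\Gamma'(t+1)=\Gamma(t+1)\Psi(t+1)$, while the third piece is just $\Gamma(t+1)\ln\Gamma(t+1)$. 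Collecting the three contributions I obtain $\sigma_f(t)=t-t\Psi(t+1)+\ln\Gamma(t+1)$, and applying the recurrence $\Psi(t+1)=\Psi(t)+1/t$ converts this to the stated form $-1+t-t\Psi(t)+\log\Gamma(1+t)$.

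For the slide derivatives, I would observe that although $\Psi(t)$ has a simple pole at $0$, the product $t\Psi(t)=-1+t\Psi(1+t)$ is analytic near $0$, as is $\log\Gamma(1+t)$, so $\sigma_f$ is real-analytic on a neighborhood of $0$ and its right-derivatives at $0$ are the usual Taylor coefficients times factorials. Plugging in the standard expansions
\begin{equation*}
\log\Gamma(1+t)=-\gamma t+\sum_{n\ge 2}\frac{(-1)^n\zeta(n)}{n}\,t^n,\qquad \Psi(1+t)=-\gamma+\sum_{n\ge 1}(-1)^{n+1}\zeta(n+1)\,t^n,
\end{equation*}
gives $t\Psi(t)=-1-\gamma t+\sum_{n\ge 2}(-1)^n\zeta(n)\,t^n$. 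Substituting both series into the closed form for $\sigma_f$ and combining like powers, the $\gamma t$ terms cancel, the constant terms cancel, and I obtain
\begin{equation*}
\sigma_f(t)=t+\sum_{n\ge 2}(-1)^{n+1}\,\frac{n-1}{n}\,\zeta(n)\,t^n,
\end{equation*}
from which $\psi_1(f)=1$ and $\psi_n(f)=n!\cdot(-1)^{n+1}\tfrac{n-1}{n}\zeta(n)=(-1)^{n+1}(n-1)!(n-1)\zeta(n)$ for $n\ge 2$.

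The main obstacle is purely bookkeeping: one must keep track of the shift between $\Psi(t)$ and $\Psi(1+t)$ so that the $-1$ arising from $t\Psi(t)$ matches the $-1$ in the stated formula, and one must verify that the coefficients of $t$ in $-t\Psi(t)$ and $\log\Gamma(1+t)$ (both involving $\gamma$) cancel so that $\psi_1(f)$ equals exactly $1$ rather than something $\gamma$-dependent. No nontrivial convergence issues arise because Lemma~\ref{L:4.01} already places $\sigma_f$ in the domain of consideration, and the series manipulations are justified by the positive radius of convergence of the expansions of $\log\Gamma(1+t)$ and $t\Psi(t)$ about $0$.
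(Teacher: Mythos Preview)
Your proof is correct and follows essentially the same route as the paper: compute $A(t)=\Gamma(1+t)$ via the substitution $u=-\ln x$, split the defining integral for $\sigma_f(t)$ into its $\ln x$, $\ln(-\ln x)$, and constant pieces, and recognize the resulting Gamma-integrals. The paper arrives at the closed form slightly more tersely (it folds the recurrence $\Psi(t+1)=\Psi(t)+1/t$ directly into the evaluation of $-\int_0^1 g_t\ln g_t\,dx$ rather than passing through the intermediate form $t-t\Psi(t+1)+\ln\Gamma(1+t)$) and then simply says ``the result now follows by differentiating from the right at $t=0$,'' whereas you actually carry out that differentiation via the known Taylor expansions of $\log\Gamma(1+t)$ and $\Psi(1+t)$; your added detail here is helpful and the cancellations you flag (the $\gamma t$ terms and the constant $-1$) are exactly what is needed.
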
 
\begin{proof} 

Let $g_t(x)=\frac{(f(x))^t}{A(t)}=\frac{(-\ln(x))^t }{\Gamma(1+t)}$ so $\int_0^1 g_t(x)dx =1$. Then
\begin{equation}\label{xx}
\begin{split}
\sigma_f(t)&= G(g_t(x))\\
&= -1 -\int_0^1g_t\ln(xg_t)dx\\
& = -1 -\int_0^1g_t\ln(x)dx -\int_0^1g_t\ln(g_t)dx\\
 &= -1 -(-1-t)-(1-\ln(\Gamma(1+t))+t\Psi(t))\\
  &= -1 +t+\ln(\Gamma(1+t))-t\Psi(t)
\end{split}
\end{equation}

The result now follows by differentiating $-1 +t+\ln(\Gamma(1+t))-t\Psi(t)$ from the right at $t=0$.
\end{proof}

\begin{corollary}\label{C:4.01} Let $f_r(x)=(-\ln(x))^r$ for $x \in (0,1)$ and $r>0$. Then $\psi_1(f_r)=r$ and $\psi_n(f_r)=(-1)^{n+1}(n-1)!(n-1)\zeta(n)r^n$ for $n>1$. The slide series for $f_r$ is given by $rt+  \sum_{n=2}^{\infty}\frac{(-1)^{n+1} (n-1)!(n-1)\zeta(n) r^n t^n}{n!}$.
\end{corollary}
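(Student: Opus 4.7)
The plan is to observe that $f_r(x) = (-\ln(x))^r = (f(x))^r$ where $f(x) = -\ln(x)$ is precisely the function handled in Theorem~\ref{T:4.03}. This means the entire statement should follow immediately by combining Theorem~\ref{T:4.03} with the scaling property in part (2) of Theorem~\ref{T:4.02}.

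First I would verify that Theorem~\ref{T:4.02}(2) applies, which requires checking that $\psi_n(f)$ exists for all $n$. Theorem~\ref{T:4.03} furnishes the closed form $\sigma_f(t) = -1 + t + \ln(\Gamma(1+t)) - t\Psi(t)$ for $t > 0$, and since $\Gamma$ is smooth at $t=1$ (hence $\ln\Gamma(1+t)$ and $\Psi(t) = \frac{d}{dt}\ln\Gamma(1+t)$ are smooth for $t$ near $0$ from the right), all right-derivatives $\psi_n(f) = \sigma_f^{(n)}(0^+)$ exist. Theorem~\ref{T:4.03} also gives their explicit values: $\psi_1(f) = 1$ and $\psi_n(f) = (-1)^{n+1}(n-1)!(n-1)\zeta(n)$ for $n > 1$.

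Next I would apply Theorem~\ref{T:4.02}(2) with the decreasing positive function $f(x) = -\ln(x)$ on $(0,1)$ and exponent $r > 0$, which gives $\psi_n(f_r) = \psi_n(f^r) = r^n \psi_n(f)$. Substituting the values from Theorem~\ref{T:4.03} yields $\psi_1(f_r) = r \cdot 1 = r$ and $\psi_n(f_r) = r^n (-1)^{n+1}(n-1)!(n-1)\zeta(n)$ for $n > 1$, exactly as stated. Finally, plugging these coefficients into the slide series from Definition~\ref{D:4.01}, the $n=1$ term contributes $\psi_1(f_r) t = rt$ and summing the remaining terms from $n=2$ gives the claimed series.

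There is no real obstacle here: the corollary is essentially a formal consequence of the chain rule identity $\sigma_{f^r}(t) = \sigma_f(rt)$ already established in the proof of Theorem~\ref{T:4.02}(2), applied to the specific function whose slide derivatives were computed in Theorem~\ref{T:4.03}. The only minor point to be careful about is ensuring the integrability hypothesis $\int_0^1 (f_r(x))^s dx < \infty$ for some $s > 0$ so that Definition~\ref{D:4.01} applies to $f_r$; this follows since $\int_0^1 (-\ln x)^{rs} dx = \Gamma(1+rs) < \infty$ for any $s > 0$.
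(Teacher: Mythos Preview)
Your proposal is correct and matches the paper's own proof, which simply states that the result is immediate from Theorem~\ref{T:4.02} and Theorem~\ref{T:4.03}. You have supplied somewhat more detail (verifying the integrability hypothesis and the existence of all $\psi_n(f)$), but the underlying argument is identical.
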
 

\begin{proof} 
Immediate from Theorem~\ref{T:4.02} and Theorem~\ref{T:4.03}.

\end{proof}

\begin{example} \label{E:4.01} $\psi_1(f)=\infty$ for the function $f(x)=exp(-1/(1-x)^2)$  on $[0,1)$. 
\end{example}

In order to apply the slide derivatives to samples in a metric space, we first find the distance $d_i$ from the $i$'th point to its nearest neighbour.  These distances can be used to construct the function $f_D$ in Definition~\ref{D:2.03} and we can then compute the first slide derivative $\psi_1(f_D)$ using the explicit formula provided in the next theorem. 

\begin{theorem}\label{T:4.04} Suppose  $ D=\{d_1 ,d_2, \dots, d_n \} $ where we assume $d_1 \geq d_2 \geq \dots \geq d_n > 0$ and let $f_D$ be the function on $[0,1)$ whose value on the interval $[\frac{i-1}{n},\frac{ i}{n})$ is $d_i$ as in Definition~\ref{D:2.03}.  Then the first slide derivative of $f_D$ is given by $\psi_1(f_D)= \frac{1}{n}\sum_{i=2}^{n-1}i\ln(i) \ln(\frac{d_{i+1}}{d_i})+\frac{\ln(n)}{n}\sum_{i=1}^{n-1} \ln(\frac{d_i}{d_n})$.
\end{theorem}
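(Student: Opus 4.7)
The plan is to compute $\sigma_{f_D}(t)$ explicitly using the piecewise-constant nature of $f_D$, differentiate at $t = 0^+$, and then apply Abel summation twice to arrange the answer in the stated form. Set $S(t) = \sum_{i=1}^n d_i^t$ so that $A(t) = S(t)/n$, and split the defining integral of $\sigma_{f_D}$ into its three natural pieces:
$$\sigma_{f_D}(t) = -1 - \frac{I_1(t)}{A(t)} - \frac{t\,I_2(t)}{A(t)} + \ln A(t),$$
where $I_1(t) = \sum_{i=1}^n d_i^t J_i$ with $J_i = \int_{(i-1)/n}^{i/n} \ln x\, dx$, and $I_2(t) = \frac{1}{n}\sum_{i=1}^n d_i^t \ln d_i$. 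Using $\int \ln x\,dx = x\ln x - x$ and writing $a_i = \tfrac{i}{n}\ln\tfrac{i}{n}$ (with the convention $a_0 = 0$), one obtains $J_i = a_i - a_{i-1} - \tfrac{1}{n}$. Since $a_0 = a_n = 0$, the sum $\sum_i J_i$ telescopes to $-1$, giving $I_1(0) = -1$.

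Differentiating at $t = 0$ is then routine. One has $A(0) = 1$, $A'(0) = \tfrac{1}{n}\sum \ln d_i = I_2(0)$, $I_1'(0) = \sum \ln(d_i)\,J_i$, and a crucial cancellation occurs: the quotient rule on $I_1/A$ yields $I_1'(0) + A'(0)$ (since $I_1(0)=-1$); the quotient rule on $tI_2/A$ at $t=0$ yields only $I_2(0)$; and $(\ln A)'(0) = A'(0)$. Thus the $A'(0)$ contributions cancel, leaving
$$\psi_1(f_D) = -I_1'(0) - I_2(0) = -\sum_{i=1}^n \ln(d_i)\,(a_i - a_{i-1}).$$

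Next I apply Abel summation with $u_i = \ln d_i$ and $v_i = a_i$. Because $v_0 = v_n = 0$, the boundary terms vanish and
$$\psi_1(f_D) = \sum_{i=1}^{n-1} a_i \ln\frac{d_{i+1}}{d_i} = \frac{1}{n}\sum_{i=1}^{n-1} i\,(\ln i - \ln n)\,\ln\frac{d_{i+1}}{d_i}.$$
The $\ln i$ piece matches the first stated sum (the $i=1$ term drops since $\ln 1 = 0$). For the $\ln n$ piece, a second telescoping shows $\sum_{i=1}^{n-1} i \ln(d_{i+1}/d_i) = -\sum_{i=1}^{n-1}\ln(d_i/d_n)$, completing the derivation.

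The principal obstacle is strictly bookkeeping: tracking the boundary indices, exploiting $a_0 = a_n = 0$, and verifying the clean cancellation of the $A'(0)$ terms when differentiating each piece of $\sigma_{f_D}(t)$. Convergence is not an issue because $f_D$ is a bounded step function, so Lemma~\ref{L:4.01} applies trivially and the integrals defining $I_1(t)$ and $I_2(t)$ are manifestly smooth in $t$ near $0$, justifying the termwise differentiation.
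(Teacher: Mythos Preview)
Your proof is correct and, in fact, somewhat cleaner than the paper's. The paper proceeds by introducing the normalized values $a_i(t)=n d_i^t/\sum_j d_j^t$, writing $\sigma_{f_D}(t)$ as a telescoping sum of terms $(ia_i/n)\ln(ia_i/n)$, differentiating each such term via $\frac{da_k}{dt}\big|_{0}=\frac{n-1}{n}\ln d_k-\frac{1}{n}\sum_{j\neq k}\ln d_j$, and then sorting the resulting expression into three blocks $P_1,P_2,P_3$ that are simplified separately. Your approach instead keeps $A(t)$ unexpanded, splits the integrand as $\ln x + t\ln f_D - \ln A(t)$, and observes directly that the $A'(0)$ contributions cancel, yielding the compact intermediate form $\psi_1(f_D)=-\sum_i \ln(d_i)(a_i-a_{i-1})$ with $a_i=(i/n)\ln(i/n)$; a single Abel summation (exploiting $a_0=a_n=0$) and one further telescope then give the stated formula. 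The two arguments are logically equivalent but organized differently: the paper's version is a brute-force expansion followed by regrouping, while yours isolates the structural cancellation early and lets summation by parts do the bookkeeping. Your route generalizes more readily (the same decomposition would set up $\psi_2$) and makes the appearance of $i\ln i$ and $\ln n$ transparent, at the cost of requiring the reader to track the quotient-rule cancellation carefully.
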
 
\begin{proof} 
By Definition~\ref{D:4.01}, we have to calculate the right hand derivative of the slide function $\sigma_{f_D}(t)$ at $t=0$ so we first find an expression for $\sigma_{f_D}(t)$.  Let $g_t(x)=\frac{(f_D(x))^t}{A(t)}$ where $A(t) = \int_0^1 (f_D(x))^t dx$ so $g_t(x)$ takes the value $a_i(t)$ on $[\frac{i-1}{n},\frac{ i}{n})$ where $a_i(t)=\frac{nd_i^t}{\sum_{i=1}^n d_i^t}$.

\begin{equation}\notag
\begin{split}
\sigma_f(t)&=G(g_t(x))\\
&=-1-\int_0^1 g_t(x)\ln(xg_t(x))dx\\
&=-1-\sum_{i=1}^n \int_{\frac{i-1}{n}}^{\frac{i}{n}} g_t(x)\ln(xg_t(x))dx\\
&=-1-\big(\big(\frac{a_1}{n}\big)\ln \big(\frac{a_1}{n}\big)-\big(\frac{a_1}{n}\big)+\sum_{i=2}^n \big(\big(\frac{ia_i}{n}\big)\ln\big(\frac{ia_i}{n}\big)-\big(\frac{(i-1)a_i}{n}\big)\ln\big(\frac{(i-1)a_i}{n}\big)-\frac{a_i}{n}\big)\big)\\
&=-\big(\frac{a_1}{n}\big)\ln \big(\frac{a_1}{n}\big)-\sum_{i=2}^n \big(\frac{ia_i}{n}\big)\ln\big(\frac{ia_i}{n}\big)+\sum_{i=2}^n \big(\frac{(i-1)a_i}{n}\big)\ln\big(\frac{(i-1)a_i}{n}\big)
\end{split}
\end{equation}
To find the derivative of $\sigma_f(t)$, we now use the facts that $\frac{d a_k}{dt} = \frac{n-1}{n}\ln(d_k)-\frac{1}{n}\sum_{i\neq k}^n \ln(d_i)$ and $\frac{d(v \ln v)}{dt}=(1+\ln v)\frac{dv}{dt}$. 
\begin{equation}\notag
\begin{split}\frac{d \sigma_f}{dt}(0)&= \frac{d}{dt} \big( \big(\frac{a_1}{n}\big)\ln \big(\frac{a_1}{n}\big)-\sum_{i=2}^n \big(\frac{ia_i}{n}\big)\ln\big(\frac{ia_i}{n}\big)+\sum_{i=2}^n \big(\frac{(i-1)a_i}{n}\big)\ln\big(\frac{(i-1)a_i}{n}\big) \big)_{t=0}\\
&=-\big(1+\ln \big(\frac{a_1(0)}{n}\big)\big) \big( \frac{1}{n}\big)\big( \big(\frac{n-1}{n}\big)\ln(d_1)-\frac{1}{n}\sum_{j\neq 1}^n \ln(d_j) \big)\\
&-\sum _{i=2}^{n}\big(\big(1+\ln \big(\frac{ia_i(0)}{n}\big)\big) \big( \frac{i}{n}\big)\big( \big(\frac{n-1}{n}\big)\ln(d_i)-\frac{1}{n}\sum_{j\neq i}^n \ln(d_j) \big)\big)\\
&+ \sum _{i=2}^{n}\big(\big(1+\ln \big(\frac{(i-1)a_i(0)}{n}\big)\big) \big( \frac{i-1}{n}\big)\big( \big(\frac{n-1}{n}\big)\ln(d_i)-\frac{1}{n}\sum_{j\neq i}^n \ln(d_j) \big)\big)
\end{split}
\end{equation}
At $t=0$, each $a_i$ is equal to $1$ so this derivative becomes
\begin{equation}\notag
\begin{split}
\frac{d \sigma_f}{dt}(0)&= -\big(1+\ln \big(\frac{1}{n}\big)\big) \big( \frac{1}{n}\big)\big( \big(\frac{n-1}{n}\big)\ln(d_1)-\frac{1}{n}\sum_{j\neq 1}^n \ln(d_j) \big)\\
&-\sum _{i=2}^{n}\big(\big(1+\ln \big(\frac{i}{n}\big)\big) \big( \frac{i}{n}\big)\big( \big(\frac{n-1}{n}\big)\ln(d_i)-\frac{1}{n}\sum_{j\neq i}^n \ln(d_j) \big)\big)\\
&+\sum _{i=2}^{n}\big(\big(1+\ln \big(\frac{i-1}{n}\big)\big) \big( \frac{i-1}{n}\big)\big( \big(\frac{n-1}{n}\big)\ln(d_i)-\frac{1}{n}\sum_{j\neq i}^n \ln(d_j) \big)\big)\\
&=-\big(1+\ln \big(\frac{1}{n}\big)\big) \big( \frac{1}{n}\big)\big( \big(\frac{n-1}{n}\big)\ln(d_1)-\frac{1}{n}\sum_{j\neq 1}^n \ln(d_j) \big)\\
&+\sum _{i=2}^{n}\big(\big(-1+(i-1)\ln(i-1)-i\ln(i) +\ln(n)\big) \big( \frac{1}{n}\big)\big( \big(\frac{n-1}{n}\big)\ln(d_i)-\frac{1}{n}\sum_{j\neq i}^n \ln(d_j) \big)\big)\\
&=P_1+P_2+P_3
\end{split}
\end{equation}
The $P_i$ terms are defined and calculated as follows.
The $P_1$ term consists of the parts of this last expression that involve the isolated $1$s.
\begin{equation}\notag
\begin{split}
P_1&= -\big(1\big) \big( \frac{1}{n}\big)\big( \big(\frac{n-1}{n}\big)\ln(d_1)-\frac{1}{n}\sum_{j\neq 1}^n \ln(d_j) \big)\\
& +\sum _{i=2}^{n}\big(\big(-1\big) \big( \frac{1}{n}\big)\big( \big(\frac{n-1}{n}\big)\ln(d_i)-\frac{1}{n}\sum_{j\neq i}^n \ln(d_j)
 \big)\big)\\
 &=0
\end{split}
\end{equation}
$P_2$ is the sum of all of the terms containing $\ln n$.
\begin{equation}\notag
\begin{split}
P_2&= \big( \frac{\ln n}{n^2}\big)\big( \big(n-1\big)\ln(d_1)-\sum_{j\neq 1}^n \ln(d_j) \big)\\
& +\big( \frac{\ln n}{n^2}\big)\sum _{i=2}^{n}\big( \big( \big(n-1\big)\ln(d_i)-\sum_{j\neq i}^n \ln(d_j)
 \big)\big)\\
 &+\big( \frac{-n\ln n}{n^2}\big)\big( \big(n-1\big)\ln(d_n)-\sum_{j\neq n}^n \ln(d_j) \big)\\
 &= \big( \frac{-n\ln n}{n^2}\big)\big( \big(n-1\big)\ln(d_n)-\sum_{j\neq n}^n \ln(d_j) \big)\\
&=\frac{\ln(n)}{n}\sum_{i=1}^{n-1} \ln(\frac{d_i}{d_n})
\end{split}
\end{equation}
$P_3$ is what remains after $P_1$ and  $P_2$ are subtracted from $\frac{d \sigma_f}{dt}(0)$.
\begin{equation}\notag
\begin{split}
P_3&= \sum _{i=2}^{n}\big(\big((i-1)\ln(i-1)-i\ln(i)\big) \big( \frac{1}{n}\big)\big( \big(\frac{n-1}{n}\big)\ln(d_i)-\frac{1}{n}\sum_{j\neq i}^n \ln(d_j) \big)\big)\\
&+\big( \frac{n\ln n}{n^2}\big)\big( \big(n-1\big)\ln(d_n)-\sum_{j\neq n}^n \ln(d_j) \big)\\
&=\frac{1}{n^2} \sum_{i=2}^{n-1} i \ln i \big( \big(-(n-1) \ln(d_i)+\sum_{j\neq i}^n \ln (d_j) \big) +\big((n-1) \ln(d_{i+1})-\sum_{j\neq {i+1}}^n \ln (d_j)   \big)  \big)\\
&=\frac{1}{n^2} \sum_{i=2}^{n-1} i \ln i \big( -n\ln(d_i)+n \ln(d_{i+1}) \big)\\
&= \frac{1}{n}\sum_{i=2}^{n-1}i\ln(i) \ln(\frac{d_{i+1}}{d_i})
\end{split}
\end{equation}
\end{proof}

The following formula for the second slide derivative $\psi_2(f)$ is motivated by calculations for small values of $n$. In the next section, we will see that the results from simulations based on this formula agree with what we would expect from theoretical considerations. 

\begin{conjecture}\label{CJ:4.01} Suppose  $ d_1 \geq d_2 \geq \dots \geq d_n > 0$ and let $f_D$ be the function on $[0,1)$ whose value on the interval $[\frac{i-1}{n},\frac{ i}{n})$ is $d_i$. Let $S_1=\sum_{i=1}^n \log(d_i)$, $S_2=\sum_{i=1}^n \log(d_i)^2$ and $S_3=\sum_{i=1}^{n-1} log(d_i/d_n)^2$. Then the second slide derivative of $f_D$ is given by 
\begin{multline*}
\psi_2(f_D)= - \big( \sum_{i=1}^{n-1} \big( i \log(i) \log(d_{i+1}/d_i)(2S_1-n \log(  d_i d_{i+1}))\big)\\
+\log(n) (2(S_1-n \log(d_n))^2-nS_3) +nS_2 - S_1^2 \big)/n^2.
\end{multline*}
\end{conjecture}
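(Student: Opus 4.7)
The plan is to extend the computation of Theorem~\ref{T:4.04} by taking a second derivative of the slide function rather than a first. Starting from the closed form
$$\sigma_{f_D}(t) = -H\bigl(a_1/n\bigr) - \sum_{i=2}^n H\bigl(ia_i/n\bigr) + \sum_{i=2}^n H\bigl((i-1)a_i/n\bigr)$$
with $H(v) = v\ln v$ and $a_i(t) = n d_i^t / \sum_j d_j^t$, I would apply $\frac{d^2}{dt^2} H(v) = (v')^2/v + (1+\ln v) v''$ to each summand. A direct differentiation of $a_i(t) B(t) = n d_i^t$, with $B(t) = \sum_j d_j^t$, yields $a_i'(0) = \ln d_i - S_1/n$ (already used in Theorem~\ref{T:4.04}) and $a_i''(0) = (\ln d_i)^2 - 2S_1 \ln d_i /n - S_2/n + 2S_1^2/n^2$. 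Two identities do most of the simplification work: $\sum_i a_i'(0) = 0$ and $\sum_i a_i''(0) = 0$, both immediate from direct summation.

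Evaluating $\sigma_{f_D}''(0)$ naturally splits into a \emph{square family} carrying the $(v'(0))^2/v(0)$ pieces and a \emph{logarithm family} carrying the $(1+\ln v(0))\,v''(0)$ pieces. For the square family, each term of the form $c\cdot a_i/n$ contributes $(c/n)(a_i'(0))^2$; pairing the $-H(ia_i/n)$ and $+H((i-1)a_i/n)$ terms collapses to $-(a_i'(0))^2/n$, and summing over $i$ yields $-(S_2 - S_1^2/n)/n = -(nS_2 - S_1^2)/n^2$, which is the last block of the conjectured formula. The logarithm family equals
$$\frac{1}{n}\sum_{i=1}^n \bigl(\ln n - 1 - i\ln i + (i-1)\ln(i-1)\bigr)\, a_i''(0);$$
the constant $\ln n - 1$ drops out because $\sum_i a_i''(0) = 0$. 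Reindexing $i-1 \mapsto j$ in the $(i-1)\ln(i-1)$ subsum and recombining leaves $-\ln(n)\,a_n''(0) + \frac{1}{n}\sum_{i=2}^{n-1} i\ln i\cdot(a_{i+1}''(0) - a_i''(0))$. Expanding $a_{i+1}''(0) - a_i''(0) = \ln(d_{i+1}/d_i)\bigl(\ln(d_i d_{i+1}) - 2S_1/n\bigr)$ recovers the $-\frac{1}{n^2}\sum i\ln i\cdot\ln(d_{i+1}/d_i)(2S_1 - n\ln(d_id_{i+1}))$ block, with the $i=1$ term harmless because $\ln 1 = 0$.

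What remains is to show that $-\ln(n)\,a_n''(0) = -\frac{\ln n}{n^2}\bigl(2(S_1 - n\ln d_n)^2 - nS_3\bigr)$, equivalently $n^2 a_n''(0) = 2(S_1 - n\ln d_n)^2 - nS_3$. Substituting the closed form $S_3 = \sum_{i<n}(\ln d_i - \ln d_n)^2 = S_2 - 2 S_1 \ln d_n + n(\ln d_n)^2$ and multiplying out both sides reduces this to a routine algebraic identity in $S_1$, $S_2$, $\ln d_n$, and $n$. The main obstacle is combinatorial rather than conceptual: the index shift in the logarithm family and the expansion of $S_3$ must be executed in a coordinated way so that every $\ln n$ cross-term in $\sigma_{f_D}''(0)$ lands in exactly the grouping $2(S_1 - n\ln d_n)^2 - nS_3$ recorded by the conjecture. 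With that bookkeeping in place, all three blocks assemble into the stated formula.
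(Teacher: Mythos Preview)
The paper does not prove this statement at all: it is stated as Conjecture~\ref{CJ:4.01}, introduced with the remark that the formula ``is motivated by calculations for small values of $n$,'' and is listed in Section~\ref{S:conclusion} among the open problems (``we need to prove Conjecture~\ref{CJ:4.01}''). So there is no proof in the paper to compare against.

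Your argument, on the other hand, is a genuine proof and is correct. The key steps check out: the second derivative identities $a_i''(0)=(\ln d_i)^2-2S_1\ln d_i/n+2S_1^2/n^2-S_2/n$ and $\sum_i a_i''(0)=0$ are right; the square family collapses to $-\frac{1}{n}\sum_i(a_i'(0))^2=-(nS_2-S_1^2)/n^2$; the logarithm family, after the constant $\ln n-1$ drops, telescopes to $-\ln n\cdot a_n''(0)+\frac{1}{n}\sum_{i=2}^{n-1}i\ln i\,(a_{i+1}''(0)-a_i''(0))$; and the factorization $a_{i+1}''(0)-a_i''(0)=\ln(d_{i+1}/d_i)\bigl(\ln(d_id_{i+1})-2S_1/n\bigr)$ recovers the first block. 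The final identity $n^2 a_n''(0)=2(S_1-n\ln d_n)^2-nS_3$ is also correct once one expands $S_3=S_2-2S_1\ln d_n+n(\ln d_n)^2$. All three blocks then assemble with the correct overall sign and $1/n^2$ factor.

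In short: where the paper only had numerical evidence and small-$n$ checks, you have supplied a full symbolic derivation that settles the conjecture. Your route is the natural continuation of the paper's own proof of Theorem~\ref{T:4.04}, carried one derivative further with the identity $(H\circ v)''=(v')^2/v+(1+\ln v)v''$ organizing the computation.
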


The next section define the slide numbers and illustrates their application to some standard point processes. 

\section{The Slide Numbers}\label{S:slide}
Sample data often consists of a set of distinct points in a metric space.  With the help of Definition~\ref{D:4.01}, we can now associate with each of these samples an infinite family of new statistics called the slide numbers. 

\begin{definition}\label{D:5.01} Let M be a metric space and let $U= \{u_1, u_2, \dots, u_k \}$ be a set of k distinct points in $M$. For each $i=1, \dots, k$, let $d_i$ be the distance from $u_i$ to its nearest neighbour in $U$.  Define a sequence $D$ by ordering the $d_i$ in descending order as $ d_{[1]} \geq d_{[2]} \geq \dots \geq d_{[k]} >0$. As in Definition~\ref{D:2.03}, let $f_{D}$ be the function on $[0,1)$ whose value on the interval $[\frac{i-1}{k},\frac{ i}{k})$ is $d_{[i]}$. Define the $n$'th slide number of $U$ by $\rho_n(U) = \psi_n(f_{D})$ and define the slide series of $U$ to be $\sum_{i=1}^{\infty} \frac{\rho_n(U)}{n!}t^n$.
\end{definition}

Values of $\rho_1(U)$ for various random variables are shown in Table~\ref{T:5.01} which shows the connection between $1/\rho_1(U)$ and the Hausdorff dimension of $[0,1]^m$, the Cantor set and the Sierpinski triangle. For certain point processes P, the statistics $\rho_n(U)$ appear to converge as the sample size $U$ gets large.  For example, for any normal random variable $Z$, the quantity $\rho_1(U)$ appears to converge to $4/\pi$ in which case it makes sense to write $\rho_1(Z)=4/\pi$. More generally we define $\rho_n(P)$ for an arbitrary point process $P$ as follows.

\begin{definition}\label{D:5.03} Let $M$ be a metric space and let $U = \{u_1, u_2, \dots \}$ be a sequence of distinct points in $M$ generated by some point process $P$ and let $ U_k = \{u_1, u_2, \dots, u_k \}$.  If $\rho_n(U_k)$ converges in probability as $k \rightarrow \infty$, then $\rho_n(P)$ is defined to be the value of this limit. If all of the limits $\rho_n(P)$ exist, then we define the slide series of the process $P$ to be $\sum_{i=1}^{\infty} \frac{\rho_n(P)}{n!}t^n$. In the case where $U$ is a sample of a random variable $X$, we will use the notation $\rho_n(X)$ instead of $\rho_n(P)$.
\end{definition}

The following proposition follows immediately from the definitions and shows that adjusting the mean or standard deviation of a random variable $X$ has no effect on $\rho_n(X)$. 

\begin{proposition}\label{P:5.01}If $X$ is a random variable, then $\rho_n(aX+b)=\rho_n(X)$ for all real number $a$ and $b$ with $a \neq 0$.   
\end{proposition}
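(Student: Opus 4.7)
The plan is to reduce the statement to the scale-invariance property already established for slide functions (Theorem~\ref{T:4.01}), first at the level of finite samples, and then propagate the identity to the limit via Definition~\ref{D:5.03}.

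First I would fix a sample $U_k = \{u_1, \ldots, u_k\}$ drawn from $X$ and consider the image sample $V_k = \{au_1+b, \ldots, au_k+b\}$, which is a sample from the random variable $aX+b$. Since the map $u \mapsto au+b$ is an affine bijection, it preserves the nearest-neighbour relation and scales every pairwise distance by the factor $|a|$. Thus if $d_{[1]} \geq d_{[2]} \geq \dots \geq d_{[k]}$ are the sorted nearest-neighbour distances for $U_k$, then the corresponding sorted sequence for $V_k$ is simply $|a|d_{[1]} \geq |a|d_{[2]} \geq \dots \geq |a|d_{[k]}$. In the notation of Definition~\ref{D:2.03}, writing $D$ for the sample $U_k$ and $D'$ for the sample $V_k$, this gives the pointwise identity $f_{D'}(x) = |a|\, f_D(x)$ on $[0,1)$.

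Next I would invoke the scale invariance in Theorem~\ref{T:4.01}: taking $\lambda = 1$ and $\beta = |a| > 0$ in that theorem (applied to the monotone step function $f_D$) yields $\sigma_{f_{D'}} = \sigma_{f_D}$ as functions on a common right-neighbourhood of $0$. Consequently every right-hand derivative at $0$ agrees, so $\psi_n(f_{D'}) = \psi_n(f_D)$ for all $n$ for which either side exists, which by Definition~\ref{D:5.01} says $\rho_n(V_k) = \rho_n(U_k)$ deterministically for each $k$.

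Finally I would pass to the limit: by Definition~\ref{D:5.03}, $\rho_n(X)$ is the limit in probability of $\rho_n(U_k)$ as $k \to \infty$, and $\rho_n(aX+b)$ is the limit in probability of $\rho_n(V_k)$. Since the two sequences of random variables are literally equal for every $k$, their limits in probability coincide whenever one exists, giving $\rho_n(aX+b) = \rho_n(X)$. There is no real obstacle here — the only point requiring a moment's care is confirming that Theorem~\ref{T:4.01} is applicable to the piecewise-constant monotone function $f_D$ (it is, since monotone decreasing is meant in the weak sense and the change of variable $z = \lambda x$ in Definition~\ref{D:4.00} goes through for step functions without modification), and noting that the sign of $a$ is absorbed by the absolute value, because distances depend only on $|a|$.
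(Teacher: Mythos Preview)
Your proof is correct and is precisely the unpacking of what the paper intends: the text simply states that the proposition ``follows immediately from the definitions'' without giving further argument, and your reduction to Theorem~\ref{T:4.01} via the observation that the affine map $u\mapsto au+b$ scales all nearest-neighbour distances by $|a|$ is exactly the content of that remark. There is nothing to add.
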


Some evidence for the convergence of the slide statistics $\rho_n(U)$ is given in  Table~\ref{T:5.01} and Table~\ref{T:5.02}. In these tables, the points in the Cantor set were generated using $\sum_{i=1}^{40} \frac{a_i}{3^i}$  where the $a_i$ are either $0$ or $2$ with probability $1/2$. Points in the Sierpinski triangle were generated using the Chaos Game ~\cite{BM}. The generation of all random numbers used in these simulations was based on the Mersenne Twister.

\begin{table}[!htb]{Simulated Values of $\rho_1(U)$ for Various Random Variables}
\begin{center}
\begin{tabular}{|c|c|c|c|c|}
 \hline
\textit{Density} & $\mu_1$ &  $\sigma_1$ & $1/\mu_1$  & $\frac{1}{\rho_1}$ \textit{conjectured}    \\
\hline
uniform on $[a,b]$ & $1.0003$ & $0.0111$  & $0.9997$  & $1$   \\
\hline
{normal} & $1.2664$ & $0.129 $  & $0.7896$ & $\pi/4 \simeq 0.785 $  \\
\hline
exponential  & $1.4590$ & $0.0141$ & $0.6854$  & ?  \\
\hline
$1/(2\sqrt{x})$ on $[0,1]$  & $1.2817$ & $0.0132 $  & $0.7802$ & ?  \\
\hline
uniform on $[0,1]^2$ & $0.5023$ & $0.0056$  & $1.9908$ & $2$   \\
\hline
uniform on $[0,1]^3$ & $0.3416$  & $0.0037$  & $2.9274$ & $3$   \\
\hline
uniform on $[0,1]^4$ & $0.2642$ & $0.0029$  & $3.7850$ & $4$   \\
\hline
bivariate normal & $0.7264$ & $0.0073 $  & $1.3766$ & ? \\
\hline
Cantor  & $1.6014$ & $0.0170 $  & $0.6244$ & $\ln(2)/\ln(3)  \approx 0.631$   \\
\hline
Sierpinski  & $0.6344$ & $0.0067 $  & $1.57624$ & $\ln(3)/\ln(2)  \approx 1.5849$   \\
\hline

\end{tabular}
\caption{ For each density, $1000$ samples of size $10000$ were generated and the value of $\rho_1(U)$ was computed for each sample $U$. The mean $\mu_1$ and standard deviation $\sigma_1$ of these $1000$ values for $\rho_1(U)$ are shown. The value given for $1/\rho_1$ is the conjectured limiting value of $1/\mu_1$ as the sample size approaches infinity.}\label{T:5.01}
\end{center}
\end{table}

\begin{table}[!htb]{Simulated Values of $\rho_2(U)$ for Various Random Variables}
\begin{center}

\begin{tabular}{|c|c|c|c|c|}
 \hline
\textit{Density} & $\mu_2$ &  $\sigma_2$  &  $\rho_2$ \textit{conjectured}   \\
\hline
uniform on $[a,b]$ & $-1.6461$ & $.0732$   & $-\pi^2/6 \approx -1.6449$   \\
\hline
normal & $-1.0273$ & $0.0860 $   & $-1$  \\
\hline
exponential & $-0.7333$ & $0.0920 $   & ?  \\
\hline
$1/(2\sqrt{x})$ on $[0,1]$  & $-2.5792$ & $0.1085$  & ? \\
\hline
uniform on $[0,1]^2$ & $-0.4096$ & $0.0186$   & $-(\pi^2/6)(1/2)^2 \approx -.4112$   \\
\hline
uniform on $[0,1]^3$ & $-0.1825$  & $0.0083$   & $-(\pi^2/6)(1/3)^2 \approx -0.1827$   \\
\hline
uniform on $[0,1]^4$ & $-0.1038$ & $0.0049$   & $-(\pi^2/6)(1/4)^2 \approx 0.1028$   \\
\hline
bivariate normal & $-0.2004$ & $0.0233 $   & ?  \\
\hline
Cantor & $-4.1464$ & $0.1933 $   &  $ \frac{(-1)^{2+1}(2-1)!(2-1)\zeta(2)}{(ln(2)/ln(3))^2} \approx -4.132$   \\ 
\hline
Sierpinski & $-0.6549$ & $0.0295 $   &  $ \frac{(-1)^{2+1}(2-1)!(2-1)\zeta(2)}{(ln(3)/ln(2))^2} \approx -0.655$   \\ 
\hline

\end{tabular}
\caption{ For each density, $1000$ samples of size $10000$ were generated and the value of $\rho_2(U)$ was computed for each sample $U$ using   Conjecture~\ref{CJ:4.01}. The mean $\mu_2$ and standard deviation $\sigma_2$ of these $1000$ values for $\rho_2(U)$ are shown. The value given for $\rho_2$ is the conjectured limiting value of $\mu_2$ as the sample size approaches infinity. }\label{T:5.02}
\end{center}
\end{table}

Consider the following thin outline for a possible argument to explain the empirical results obtained for $[0,1]^m$ in Table~\ref{T:5.01} and Table~\ref{T:5.02}.  Suppose $U = \{u_1, u_2, \dots, u_k \}$ is a very large sample of points $(x_1,x_2,\dots,x_m)$ in $[0,1]^m$ where the $x_i$ are chosen independently and uniformly at random from $[0,1]$,  and let $u$ be a particular point in $U$. By \cite{SKM}, the probability that a point is within $r$ of $u$ is approximately $1-e^{-\alpha r^m}$ for some $\alpha$. If the sample $U$ is large enough, then the set of nearest neighbour distances will be sufficiently independent \cite{K,S} that their empirical cumulative distribution will also be approximately equal to $1-e^{-\alpha r^m}$. If $D$ is the ordered sequence of nearest neighbour distances, then $L_{D^*}(r)$ in Definition~\ref{D:2.03} will be approximately $1-e^{-\beta r^m}$ for some $\beta$ and $1-L_{D^*}(r)$ will be approximately $e^{-\beta r^m}$ with inverse $f_{D^*}(x) = \frac{(-\log(x))^\frac{1}{m}}{\Gamma(1+\frac{1}{m})}$. Now $f_{D^*}(x)$ has the same genial entropy as $1-L_{D^*}(r)$ by Theorem~\ref{T:2.03} so Corollary~\ref{C:4.01} now suggests the following conjecture which is supported by the empirical results shown in Table~\ref{T:5.01} and Table~\ref{T:5.02}.

\begin{conjecture}\label{CJ:5.01} Let $U = (u_1, u_2, \dots )$ be a sequence of points $(x_1,x_2,\dots,x_m)$ in $[0,1]^m$, where the $x_i$ are chosen independently and uniformly at random from $[0,1]$ and let $ U_k = \{u_1, u_2, \dots, u_k \}$. Then as $k \rightarrow \infty$,  $\rho_1(U_k)$ converges in probability to $1/m$ and  $\rho_n(U_k)$ converges in probability to $(-1)^{n+1}(n-1)!(n-1)\zeta(n)/m^n$ for $n >1$.
\end{conjecture}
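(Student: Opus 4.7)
The plan is to make rigorous the heuristic argument sketched in the paragraph preceding the conjecture. The starting point is the classical Palm calculation for a homogeneous Poisson process in $\mathbb{R}^m$: the nearest-neighbor distance from a typical point has distribution function $1 - e^{-\lambda \omega_m r^m}$, where $\omega_m$ is the volume of the unit $m$-ball. For $k$ i.i.d.\ uniform points in $[0,1]^m$, a standard Poisson approximation (via a torus coupling to absorb boundary effects, or via the de-Poissonization arguments in the K and SKM references) shows that after rescaling by $k^{1/m}$ the empirical distribution of nearest-neighbor distances converges in probability to $1 - e^{-\omega_m r^m}$, uniformly on compact subsets of $[0,\infty)$.

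Next I would change the normalization from $k^{1/m}$ to the empirical mean $\mu$ of the nearest-neighbor distances. Since $\mu$ concentrates around a deterministic constant times $k^{-1/m}$, the normalized empirical tail $1 - L_{D^*}(r)$ converges in probability to the corner density $1 - e^{-\beta r^m}$ with $\beta = \Gamma(1 + 1/m)^m$, the constant fixed by the mean-one condition. The inverse of this limit is exactly $f(x) = (-\log x)^{1/m}/\Gamma(1+1/m)$, and by Theorem~\ref{T:2.03} applied to the empirical step functions the genial entropies $G(f_{D^*})$ and $G(1-L_{D^*})$ coincide for every sample. This is the sense in which one expects $\sigma_{f_{D^*}}(t) \to \sigma_{f}(t)$ in probability for each $t \geq 0$ in a right-neighborhood of $0$.

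The final step would be to upgrade this pointwise convergence of slide functions to convergence of their right-hand derivatives $\psi_n(f_{D^*}) \to \psi_n(f)$. The estimates used in the proof of Lemma~\ref{L:4.01} bound $\sigma_g(t)$ and its variation in terms of $\int_0^1 g^s\,dx$ and a few related integrals; by showing these auxiliary quantities are tight as random variables in $k$, one obtains enough equicontinuity near $t=0$ to interchange the limit and the derivative. Corollary~\ref{C:4.01} applied with $r = 1/m$ (combined with scale invariance from Theorem~\ref{T:4.01} to absorb the factor $1/\Gamma(1+1/m)$) then gives $\psi_1(f) = 1/m$ and $\psi_n(f) = (-1)^{n+1}(n-1)!(n-1)\zeta(n)/m^n$, matching the claim.

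The main obstacle is this last interchange of limits. Pointwise convergence of corner densities, or equivalently of slide functions at fixed $t>0$, does not by itself imply convergence of right-hand derivatives at $0$; one needs uniform control of $\sigma_{f_{D^*}}(t)$ for small $t$. Producing that control requires quantitative Poisson approximation sharp enough to bound random integrals of the form $\int_0^1 (f_{D^*})^t |\log f_{D^*}|^n\,dx$ uniformly in $k$ and in $t$ near $0$. A secondary difficulty is handling the weak dependence among nearest-neighbor distances rigorously, since the Poisson limit theorems in the references are formulated for specific functionals rather than for the full empirical distribution of nearest-neighbor distances that the argument uses.
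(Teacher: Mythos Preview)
The statement you are attempting to prove is labelled in the paper as a \emph{conjecture}, not a theorem: the paper does not contain a proof of it. What the paper offers is exactly the ``thin outline for a possible argument'' in the paragraph immediately preceding Conjecture~\ref{CJ:5.01}, and your proposal is a faithful attempt to make that outline rigorous. So there is no divergence of approach to report; your plan and the paper's heuristic are the same sketch.

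Your self-diagnosed obstacles are precisely the reasons the paper stops at a conjecture. The heuristic rests on two uncontrolled steps: (i) passing from convergence of the empirical nearest-neighbour tail $1-L_{D^*}$ to convergence of the \emph{derivatives} of the slide function $\sigma_{f_{D^*}}$ at $t=0$, and (ii) treating the nearest-neighbour distances as sufficiently independent for the empirical cdf to track the Poisson limit uniformly. The paper acknowledges both issues only informally (``approximately'', ``sufficiently independent'') and defers to references \cite{K,S,SKM} without extracting quantitative statements. Your observation that the interchange of limit and derivative requires uniform-in-$k$ control of integrals like $\int_0^1 (f_{D^*})^t |\log f_{D^*}|^n\,dx$ is the correct identification of the missing analytic ingredient, and nothing in the paper supplies it. Until that equicontinuity is established, the argument remains heuristic, which is why the paper records the claim as Conjecture~\ref{CJ:5.01} rather than as a theorem.
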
 

There appear to be cases other than $[0,1]^m$ for which the dimension equals $m$ and $\rho_n$ converges to $(-1)^{n+1}(n-1)!(n-1)\zeta(n)/m^n$.  For example, if we take $m=\log(2)/\log(3)$ then $(-1)^{2+1}(2-1)!(2-1)\zeta(2)/m^2 \approx -4.132$ which is close to the value shown in Table~\ref{T:5.02} for the  Cantor set. A similar result holds for the Sierpinski triangle and prompts us to make the following definition. 

\begin{definition}\label{D:5.04} In the context of Definition~\ref{D:5.03}, we say that a point process $P$ is tangible provided there is a number $d$ with $\rho_1(P)=1/d$  and $\rho_n(P)=(-1)^{n+1}(n-1)!(n-1)\zeta(n)/d^n$ for $n>1$. The number $d$ will be called the slide dimension of the process.  If there is no such number, the process will be called intangible. 
\end{definition}

The values for $\rho_1$ and $\rho_2$ given in Table~\ref{T:5.01} and Table~\ref{T:5.02} suggest that the normal distribution does not satisfy the conditions for tangibility in  Definition~\ref{D:5.04} so cannot be assigned a slide dimension. In the case of a tangible process, we can recover the dimension by rearranging the second derivative to obtain  $d = \frac{\pi}{\sqrt{-6\rho_2}}$.  This relationship is particularly useful for spatial statistics because it provides considerably better estimates for the dimension of $[0,1]^m$ than the values for $1/\mu_1$ shown in Table~\ref{T:5.01}. For a tangible process $P$, we would like to know in general if the statistics $\sqrt[n]{\frac{(-1)^{n+1}(n-1)!(n-1)\zeta(n)}{\rho_n}}$ converge more quickly to the dimension for larger values of $n$.

For the subset $U$ of $R$ generated by $20000$ iterations of $x_{i+1}=x_i+cos(i)$ with $x_0=0$, the value of $\rho_1(U)$ is approximately $0.53$. But values of $\rho_1(U)$ less than $1$ cannot occur for continuous real-valued random variables according to the next conjecture which is supported by the results in Table~\ref{T:5.01}. 

\begin{conjecture}\label{CJ:5.03} If $X$ is a continuous real-valued random variable for which $\rho_1(X)$ exists, then $\rho_1(X) \geq 1$.
\end{conjecture}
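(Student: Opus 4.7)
The plan is to pass to a large-sample limit that turns $\rho_1(X)$ into an explicit functional of the density of $X$, and then prove that this functional is bounded below by $1$. For a sample of size $k$ drawn from a continuous density $f$ on $\mathbb{R}$, a Poisson-type heuristic (as in the outline preceding Conjecture~\ref{CJ:5.01}) shows that, conditional on a sample point lying at $x$, its nearest-neighbour distance is approximately exponential with rate $2kf(x)$. Averaging over the location and rescaling, the empirical survival function $1-L_{D^{*}}(r)$ should converge in probability to the mixture of exponentials
\[ H(r) \;=\; \int f(x)\,e^{-\lambda f(x) r}\,dx, \]
with scaling constant $\lambda$ irrelevant by Theorem~\ref{T:4.01}.

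By Theorem~\ref{T:2.03}, the genial entropy of $f_{D^{*}}$ matches that of $1-L_{D^{*}}$, so granted the interchange of the large-sample limit with differentiation at $t=0^{+}$, we obtain $\rho_1(X)=\psi_1(H)$. When $f$ is constant the mixture collapses to a pure exponential and Corollary~\ref{C:4.01} (with $r=1$) immediately gives $\psi_1(H)=1$, matching the conjectured sharp value. Thus everything reduces to proving the functional inequality $\psi_1(H)\geq 1$ for the mixture $H$ attached to an arbitrary continuous density.

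To attack this inequality I would differentiate $\sigma_H$ at $t=0$ using Definition~\ref{D:4.00}, expressing $\psi_1(H)$ as an expectation involving $f$, $\log f$, and $\log r$ against the measure $f(x)\,e^{-\lambda f(x)r}\,dx\,dr$. I would then try a variational argument: write $f=1+\epsilon g$ for a mean-zero perturbation $g$ on a reference interval, expand $\psi_1(H)$ to second order in $\epsilon$, check that the constant term is $1$ and the first-order term vanishes, and verify nonnegativity of the second-order term. This would establish the uniform density as a local minimiser, and a rearrangement or log-concavity argument might then globalise the bound.

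The main obstacle is precisely this global inequality. A continuous density on $\mathbb{R}$ can be arbitrarily far from uniform, so a local second-variation bound is not enough; effectively one needs a new log-Sobolev-type inequality tailored to survival functions that are mixtures of exponentials whose rates are drawn from $f$. A secondary technical issue is justifying the exchange of the sample-size limit with the derivative at $t=0^{+}$, for which uniform control of $\sigma_{f_{D^{*}}}(t)$ near $0$ across sample realisations will be needed, plausibly via a tightness argument that leverages the monotonicity of $f_{D^{*}}$ together with the estimates already established in Lemma~\ref{L:4.01}.
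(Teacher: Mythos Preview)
The statement you are attempting to prove is a \emph{conjecture} in the paper, not a theorem: the author offers no proof and supports it only with the simulation evidence in Table~\ref{T:5.01}. There is therefore no proof in the paper to compare your proposal against; you are attempting to settle an open problem.

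As a strategy your outline is reasonable and parallels the heuristic the author sketches before Conjecture~\ref{CJ:5.01}. However, there is a genuine error in your use of Theorem~\ref{T:2.03}. That theorem asserts only that $f_{D^{*}}$ and $1-L_{D^{*}}$ have the same \emph{genial entropy}, i.e.\ the same value of $G(\cdot)$; it does not say they have the same slide function $\sigma_{(\cdot)}(t)$ or the same slide derivatives $\psi_n$. In fact they do not: for $f(x)=-\ln x$ on $(0,1)$ Theorem~\ref{T:4.03} gives $\psi_1(f)=1$, whereas for its inverse $g(y)=e^{-y}$ on $(0,\infty)$ a direct calculation yields $\sigma_g(t)=\gamma$ for every $t>0$, so $\sigma_g$ is not even right-continuous at $0$ and $\psi_1(g)$ fails to exist. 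Hence your identification $\rho_1(X)=\psi_1(H)$ is incorrect as written. The object you should be targeting is $\psi_1$ of the \emph{quantile function} $H^{-1}$ on $(0,1)$, which is the limiting shape of $f_{D^{*}}$ itself; indeed your check of the uniform case actually invokes Corollary~\ref{C:4.01}, which is a statement about $(-\ln x)^r$, i.e.\ about $H^{-1}$, not about $H$.

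With that correction made, the two obstacles you already flag---the global functional inequality $\psi_1(H^{-1})\ge 1$ for an arbitrary mixture-of-exponentials survival function $H$, and the interchange of the sample-size limit with the derivative at $t=0^{+}$---remain the substantive open steps, and the paper offers nothing toward either. Your local variational expansion around the uniform density would at best show that the uniform is a critical point; a global bound would need something stronger, likely a convexity or rearrangement argument formulated directly at the level of the quantile function $H^{-1}$.
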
 
If $U$ consists of the first $20,000,000$ primes, then the value of $\rho_1(U)$ is approximately $0.77235$ which is interesting in view of Conjecture~\ref{CJ:5.03}. 

The values of $\rho_2(X)$ are all negative in Table~\ref{T:5.02} but $\rho_2(X)$ appears to be positive when $X$ has a Cauchy distribution which raises the following question.

\begin{question}\label{Q:5.04} What conditions on a random variable $X$ determine the sign of $\rho_n(X)$ when $n>1$?
\end{question}

\section{Applications of the slide statistics to financial returns.}\label{S:returns}
If the daily closing prices of an index or stock are $X_1, X_2, \dots$ and the daily returns are given by $U_i =\ln(X_{i+1}/X_i)$, then a central question of financial mathematics is the problem of describing the distribution of the returns $U_i$. We now show that the slide statistics impose strict constraints that allow us to quickly reject many possible candidates for this return distribution.  Our approach will be to fix a sample size $r$ and use the standard metric on $R^n$ to calculate $\rho_1$ and $\rho_2$ for the subset of $R^n$ given by $T_n =\{(U_1, U_2, ... , U_n), (U_2, U_3, ... , U_{n+1}), \dots,(U_r, U_3, ... , U_{n+r-1})\}$. Note that by the same reasoning used in Proposition~\ref{P:5.01}, the values of  $\rho_i(T_n)$  are not changed if we replace the $U_i$ by $aU_i +b$ with $a \neq 0$. In other words, $\rho_i(T_n)$ is detecting information about the returns that has nothing to do with their mean and standard deviation.

Figure~\ref{fig:Fig1} shows that the values of $\rho_1(T_n)$ obtained for the S$\&$P $500$ are very different than the values obtained for either the Normal or Laplace distributions. The lowest of the $11$ curves shown in Figure~\ref{fig:Fig1} is the plot of $\rho_1(T_n)$ against $n$ for the S$\&$P $500$ for the $5000$ trading days ending December 31, 2014.  The middle cluster of five curves are the corresponding plots obtained for five different simulated sequences of returns having the Laplace distribution. The upper cluster of five curves was obtained for the Normal distribution and we see that these curves are a bad fit to the $\rho_1$ curve for the S$\&$P $500$. Any suggested model for the returns of the S$\&$P $500$ must be able to generate an approximation of this $\rho_1$ curve which places a tight constraint on potential stochastic processes. In Figure~\ref{fig:Fig2}, the values of $\rho_2(T_n)$ shown on the upper graph were obtained for the S$\&$P $500$  for the $5000$ trading days ending December 31, 2014. For this particular time period, $\rho_2(T_n)$ was positive but the values obtained for samples from Normal distributions are typically negative as shown by the $5$ graphs below the horizontal axis in Figure~\ref{fig:Fig2}. 

\begin{figure}[!ht]
\centering
\includegraphics[width=4in]{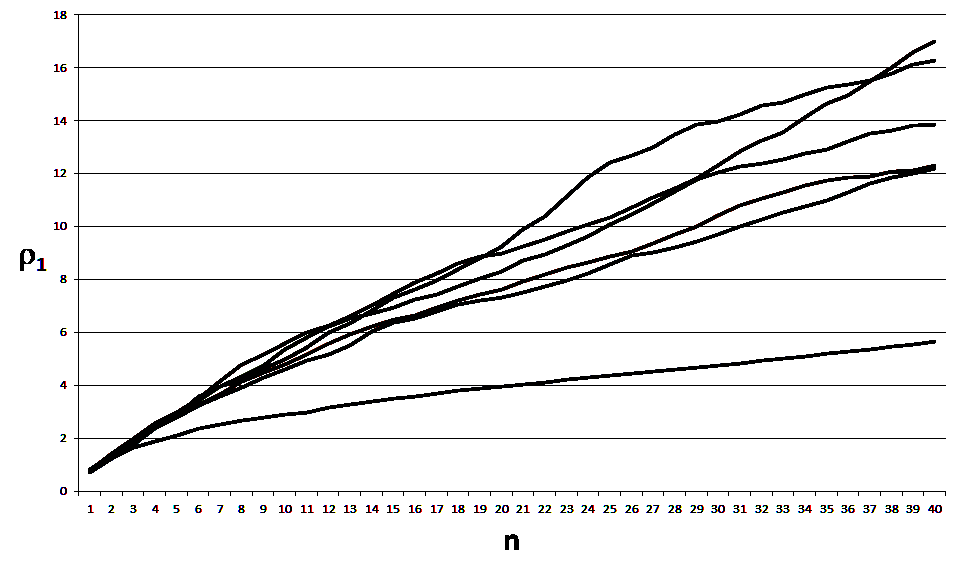}
\caption{The lower graph is the plot of  $\rho_1(T_n)$ against $n$ for the $500$ monthly returns of the S$\&$P $500$ up to the end of $2014$. The upper cluster of five graphs are the corresponding plots obtained from five different simulated sequences of normally distributed returns. }
\label{fig:Fig2}
\end{figure}

The failure of the daily returns to be normal might be due to dependencies in the consecutive daily returns and it is possible that we might obtain a better fit using the monthly returns instead. The bottom curve in Figure~\ref{fig:Fig2} shows $\rho_1(T_n)$ plotted against $n$ for the $500$ monthly returns of the S$\&$P $500$ up to the end of $2014$.  The group of five curves at the top of Figure~\ref{fig:Fig2} are the corresponding plots obtained for five different simulated sequences of returns having a Normal distribution. The fit is better than what we obtained for the daily returns but once again the normal $\rho_1$ curves are not a match for the $\rho_1$ curve for the S$\&$P $500$ which is consistent with the non-normality of such returns found in \cite{BP}.  As another point of view on the non-normality of the monthly returns,  Figure~\ref{fig:Fig3} shows the points $(\rho_2(T_3),\rho_1(T_3))$ plotted for $100,000$ samples from a normal distribution as well as the point obtained for the $500$ monthly returns of the S$\&$P $500$ up to the end of $2014$. The possibility of describing these monthly returns as a mixture of two Gaussians is considered in \cite{BP} but we find $\rho_2(T_3) <0$ for their models but we sometimes find $\rho_2(T_3) >0$ for the S$\&$P $500$.

\begin{figure}[!ht]
\centering
\includegraphics[width=4in]{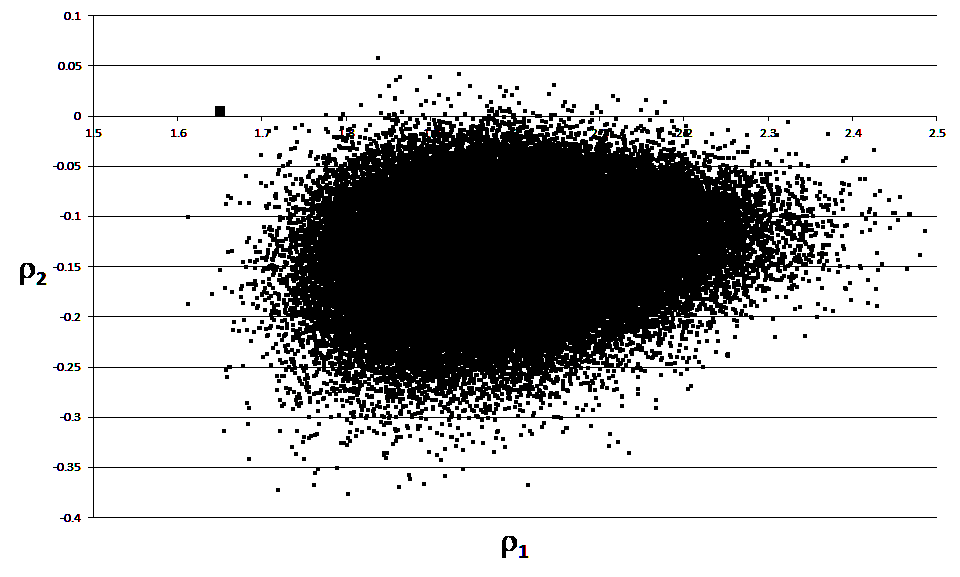}
\caption{The small square at the upper left plots the point $(\rho_2(T_3),\rho_1(T_3))$ obtained for the $500$ monthly returns of the S$\&$P $500$ ending with December of $2014$. The dots are the points $(\rho_2(T_3),\rho_1(T_3))$ obtained from $100,000$ different simulated sequences of normally distributed returns.  }
\label{fig:Fig3}
\end{figure}

\begin{figure}[!ht]
\centering
\includegraphics[width=4in]{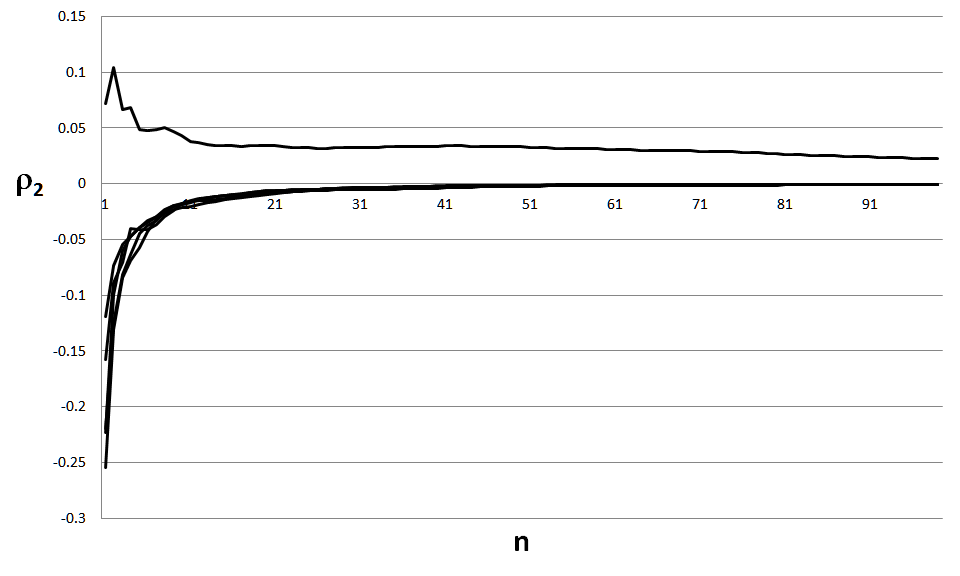}
\caption{The upper graph is the plot of  $\rho_2(T_n)$ against $n$ for the S$\&$P $500$ for the $5000$ trading days ending December 31, 2014.  The lower cluster of five graphs are the corresponding plots obtained from five different simulated sequences of normally distributed returns. }
\label{fig:Fig4}
\end{figure}

\begin{table}[!ht]{Values of $\rho_1$ and $\rho_2$ for Various Stocks and Indexes}
\begin{center}
\begin{tabular}{|c|c|c|}
 \hline
\textit{Name} & $\rho_1(T_3)$ &  $\rho_2(T_3)$     \\
\hline
Exxon & $1.44659$ & $-0.12179$     \\
\hline

\hline
FTSE & $1.34232$ & $-0.08638$     \\
\hline

\hline
IBM & $1.32328$ & $0.011604$     \\
\hline

\hline
3M & $1.38742$ & $-0.07126$     \\
\hline

\hline
NASDAQ & $1.24174$ & $0.03204$     \\
\hline

\hline
Russell $2000$ & $1.33124$ & $-0.12882$     \\
\hline

\hline
S$\&$P $500$ & $1.41405$ & $-0.02228$     \\
\hline

\end{tabular}
\caption{ Each of these values was calculated using the 5000 trading days ending with June 30, 2008. }\label{T:5.03}
\end{center}
\end{table}

The stable distributions are often considered ~\cite{SB,BKS,HP} as possible models for financial returns.  We now consider the family of stable distributions $S(\alpha,\beta,\gamma,\delta)$ described in  ~\cite{W} where $\alpha \in (0,2]$ is the stable parameter, $\beta \in [-1,1]$ is the skewness parameter, $\gamma >0 $ is the scale parameter and $\delta  \in (-\infty,\infty)$ is the location parameter. By Proposition~\ref{P:5.01},  the values of $\rho_i$ are not affected by changes in $\gamma $  and $\delta$ so we will simply work with the family $S(\alpha,\beta,1,0)$ which we write as $S(\alpha,\beta)$. Figure~\ref{fig:Fig5} shows the points $(\rho_2(T_3),\rho_1(T_3))$ from Table~\ref{T:5.03} plotted together with $100,000$ samples from stable distributions with parameters $\alpha$ and $\beta$ chosen uniformly at random from the intervals $(1,2)$ and $(0,1)$ respectively. We see that the values in Table~\ref{T:5.03} lie outside the region corresponding to the stable distributions.  Figure~\ref{fig:Fig6} show an expanded view of Figure~\ref{fig:Fig5} showing that the slide statistics for the financial returns in Table~\ref{T:5.03}  are well outside the region corresponding to samples from stable distributions. This failure of the stable distributions to fit financial returns is consistent with the findings of \cite{LLW,BKS}.

\begin{figure}[!ht]
\centering
\includegraphics[width=4in]{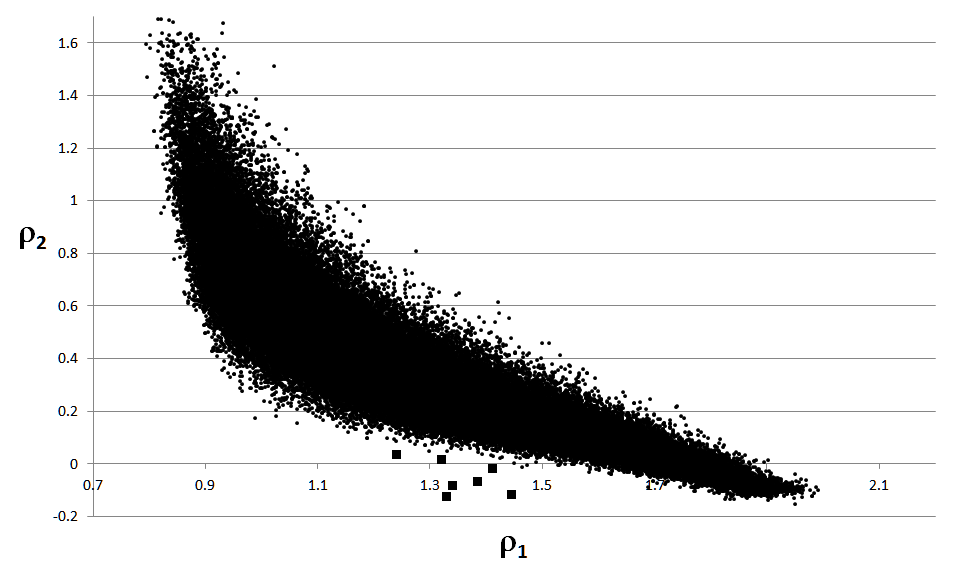}
\caption{The dots plotted here are the points $(\rho_1(T_3),\rho_2(T_3))$ obtained for $100,000$ samples from stable distributions with parameters $\alpha$ and $\beta$ chosen uniformly at random from the intervals $(1,2)$ and $(0,1)$ respectively.  The values in Table~\ref{T:5.03} are shown plotted with squares. }
\label{fig:Fig5}
\end{figure}

\begin{figure}[!ht]
\centering
\includegraphics[width=4in]{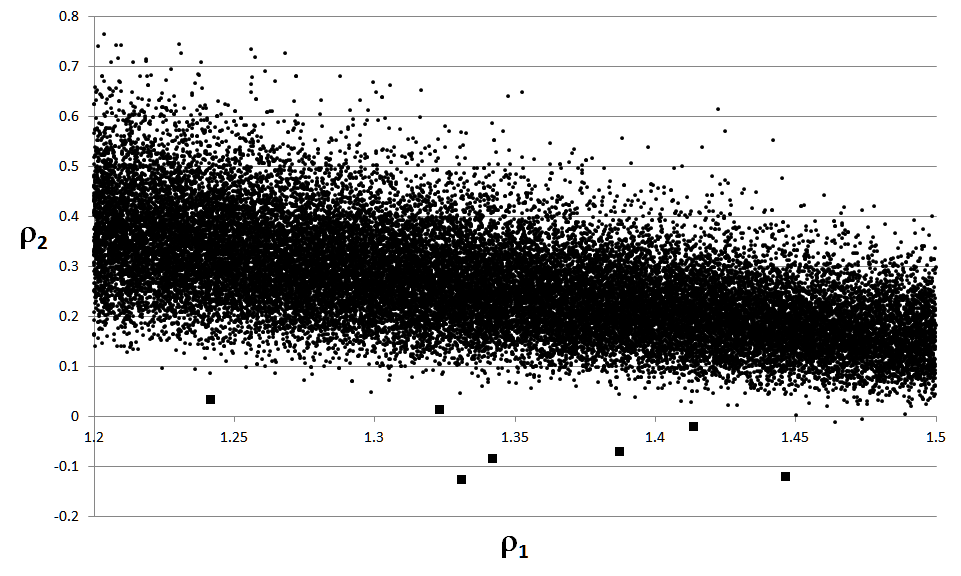}
\caption{Expanded view of Figure~\ref{fig:Fig5}}
\label{fig:Fig6}
\end{figure}

\section{Conclusions and Future Work}\label{S:conclusion}

As we have seen, the statistics $\rho_1$ and $\rho_2$ can be used as the basis for goodness of fit test for financial returns. Much more research needs to be done to better understand the role of the slide statistics in characterizing financial data. In particular, we would like to understand the relationship between the sign of $\rho_2$ and the behaviour of financial markets. The simulations we have described suggest that $\rho_1$ and $\rho_2$ can be used to distinguish between probability distributions and are capapble of detecting dimensional information. The values of $\rho_n$ we obtained through simulation point to a larger theory of these statistics which is currently being developed.  At present however, the $\rho_n$ are quite mysterious and much work will need to be done to understand all they are telling us about sets of points in metric spaces, random variables or point processes in general.  

We gave examples of point processes on the Cantor set and the Sierpinski triangle for which $1/\rho_1$ converged to the dimension of the fractal. We would like to understand when this occurs in general and also the relationship between $1/\rho_1$ and the usual definitions of dimension.  More generally, we would like to know when a point process is tangible in the sense of  Definition~\ref{D:5.04}. When a process is tangible it is possible to use the expression $\sqrt[n]{\frac{(-1)^{n+1}(n-1)!(n-1)\zeta(n)}{\rho_n(U)}}$ as a statistic for estimating the dimension. We gave examples in which it converged to the dimension faster for $n=2$ than for $n=1$ and we would like to know what happens for larger values of $n$.

In terms of calculations, we need to prove Conjecture~\ref{CJ:4.01} concerning the calculation of $\rho_2(U)$ and we need formulas for $\rho_n(U)$ for $n>2$.  Given the complexity of our conjecture for $\rho_2(U)$, the formulas for larger values of $n$ are likely to be very complicated. The convergence of the $\rho_n(U)$ for real-valued random variables can sometimes be improved by using the distances between consecutive points rather than the distances to nearest neighbours. We would like to have a better understanding of this situation and also to know if there are any higher dimensional analogues.

In defining the slide statistics, we used the functions $(f(x))^t$ which can be thought of as a continuous deformation of $f(x)$ at $t=1$ into the constant function $1$ at $t=0$.  We can achieve the same effect using the functions $tf(x)+(1-t)$ and calculate the derivatives corresponding to the slide derivatives in Section~\ref{S:calculus}.  It turns out that only the first of these statistics is interesting and while it is easier to calculate than $\rho_1$ it often doesn't converge.

\end{document}